\theoremstyle{plain}
\newtheorem{theorem}{Theorem}[section]
\newtheorem{corollary}[theorem]{Corollary}
\newtheorem{lemma}[theorem]{Lemma}
\newtheorem{proposition}[theorem]{Proposition}
\theoremstyle{definition}
\newtheorem{definition}[theorem]{Definition}
\newtheorem{remark}[theorem]{Remark}
\numberwithin{equation}{section}
\newcommand{\ds}{\displaystyle}
\newcommand{\Real}{\mathbb{R}}
\newcommand{\Rational}{\mathbb{Q}}
\newcommand{\Integer}{\mathbb{Z}}
\newcommand{\Sphere}{S}
\let\Contentsline\contentsline
\renewcommand\contentsline[3]{\Contentsline{#1}{#2}{}}
\begin{document}

\title{More on spherical designs of harmonic index $t$}

\author{Yan Zhu}
\address{Department of Mathematics, Shanghai Jiao Tong University, 800 Dongchuan Road, Minhang District, Shanghai, China}
\email{zhuyan@sjtu.edu.cn}

\author{Eiichi Bannai}
\address{Department of Mathematics, Shanghai Jiao Tong University, 800 Dongchuan Road, Minhang District, Shanghai, China}
\email{bannai@sjtu.edu.cn (and bannai@math.kyushu-u.ac.jp)}

\author{Etsuko Bannai}
\address{Misakigaoka 2-8-21, Itoshima-shi, Fukuoka, 819-1136, Japan}
\email{et-ban@rc4.so-net.ne.jp}

\author{Kyoung-Tark Kim}
\address{Department of Mathematics, Shanghai Jiao Tong University, 800 Dongchuan Road, Minhang District, Shanghai, China}
\email{kyoungtarkkim@gmail.com}

\author{Wei-Hsuan Yu}
\address{ Department of Mathematics, Michigan State University, 619 Red Cedar Road, East Lansing, MI 48824}
\email{u690604@gmail.com}

\maketitle

% \footnotetext[1]{\color{red} The first author and the corresponding author}

% --------------------------------------------------------------------------------
\begin{abstract}
A finite subset $Y$ on the unit sphere $\Sphere^{n-1} \subseteq \mathbb{R}^n$ is called a spherical design of harmonic index $t$, if the following condition is satisfied:
$\sum_{\mathbf{x}\in Y}f(\mathbf{x})=0$ for all real homogeneous harmonic polynomials $f(x_1,\ldots,x_n)$ of degree $t$.
Also, for a subset $T$ of $\mathbb{N} = \{1,2,\cdots \}$, a finite subset $Y\subset \Sphere^{n-1}$ is called a spherical design of harmonic index $T,$ if $\sum_{\mathbf{x}\in Y}f(\mathbf{x})=0$ is satisfied for all real homogeneous harmonic polynomials $f(x_1,\ldots,x_n)$ of degree $k$ with $k\in T$.

In the present paper we first study Fisher type lower bounds for the sizes of spherical designs of harmonic index $t$ (or for harmonic index $T$).
We also study 'tight' spherical designs of harmonic index $t$ or index $T$.
Here 'tight' means that the size of $Y$ attains the lower bound for this Fisher type inequality.
The classification problem of tight spherical designs of harmonic index $t$ was started by Bannai-Okuda-Tagami (2015), and the case $t = 4$ was completed by Okuda-Yu (2015+).
In this paper we show the classification (non-existence) of tight spherical designs of harmonic index 6 and 8, as well as the asymptotic non-existence of tight spherical designs of harmonic index $2e$ for general $e\geq 3$.
We also study the existence problem for tight spherical designs of harmonic index $T$ for some $T$, in particular, including index $T = \{8,4\}$.
We use (i) the linear programming method by Delsarte, (ii) the detailed information on the locations of the zeros as well as the local minimum values of Gegenbauer polynomials, (iii) the generalization by Hiroshi Nozaki of the Larman-Rogers-Seidel theorem on $2$-distance sets to $s$-distance sets, (iv) the theory of elliptic diophantine equations, and (v) the semidefinite programming method of eliminating some $2$-angular line systems for small dimensions.\\
\end{abstract}
% --------------------------------------------------------------------------------

\setcounter{tocdepth}{1}
%\tableofcontents

% \begin{itemize}
% \item [] Table of contents\\
% \item [$\S1$] Introduction and spherical designs of harmonic index $t$ (or $T$)
% \item [$\S2$] Linear programming method for spherical designs of harmonic index $t$ (or $T$)
% \item [$\S3$] Fisher type lower bounds and tight spherical designs of harmonic index $t$ (or $T$)
% \item [$\S4$] The non-existence of tight spherical designs of harmonic index 6 and 8
% \item [$\S5$] The asymptotic non-existence of tight spherical designs of harmonic index $2e$ for general $e$
% \item [$\S6$] Tight spherical designs of harmonic index $\{8,4\}$
% \item [$\S7$] Tight spherical designs of harmonic index $\{8,2\}$, $\{8,6\}$, $\{6,2\}$, $\{6,4\},$ as well as tight spherical designs of harmonic index $\{10,6,2\}$, $\{12,8,4\}$
% \item [$\S8$]  Concluding remarks
% \item [] References
% \end{itemize}

Keywords: spherical design, spherical designs of harmonic index,
 Gegenbauer polynomial, Fisher type lower bound, tight design,
 Larman-Rogers-Seidel's theorem, Delsarte's method, linear
 programming, semidefinite programming, elliptic diophantine equation

\section{Introduction and spherical designs of harmonic index $t$ (or $T$)}
\label{sec:introduction}

\noindent
Throughout this paper $Y$ is assumed to be a finite non-empty set, and we denote the set of positive (resp. non-negative) integers by $\mathbb{N}$ (resp. $\mathbb{N}_0$).

Let ${\Sphere^{n-1} = \{\mathbf{x} = (x_1,\cdots ,x_n)\in \mathbb{R}^n \; \mid\; x_1^2 + \cdots +x_n^2=1\}}$ be the unit sphere in the Euclidean space $\mathbb{R}^n.$
Delsarte--Goethals--Seidel \cite[Definition 5.1]{DGS1977} (1977) gave the following definition of spherical designs.

\begin{definition}[Spherical $t$-designs]\label{dfe:1.1}
Let $ {t\in \mathbb{N}_0}$.
A subset $Y\subset \Sphere^{n-1}$ is called a \textit{spherical $t$-design} on $\Sphere^{n-1}$, if and only if
\begin{equation}\label{spht}
\frac{1}{\left|\Sphere^{n-1}\right|}\int_{\mathbf{x}\in\Sphere^{n-1}}f(\mathbf{x})d\sigma(\mathbf{x})=\frac{1}{\left|Y\right|}\sum_{\mathbf{x}\in Y}f(\mathbf{x})
\end{equation}
for any real polynomial $f(x_1,\ldots ,x_n)$ of degree at most $t$, where $\left|\Sphere^{n-1}\right|$ denotes the volume (namely the surface area) of the sphere $\Sphere^{n-1},$
and the integral is the surface integral on $\Sphere^{n-1}.$
\end{definition}

The condition~\eqref{spht} is known to be equivalent to the condition:
\[ \sum_{\mathbf{x}\in Y}f(\mathbf{x})=0 \; \mbox{ for all }  f(x_1, \ldots, x_n)\in \mathrm{Harm}_k^n,\; 1\leq k\leq t\, , \]
where $\mathrm{Harm}_k^n$ is the space of real homogeneous harmonic polynomials of degree $k$ in $n$ indeterminates.

In connection with the latter equivalent defining condition for spherical $t$-designs, we define a weaker concept which we call designs of harmonic index $t$ as follows.

\begin{definition}[Spherical designs of harmonic index $t$]\label{dfe:1.2}
A subset $Y\subset  \Sphere^{n-1}$ is called a \textit{spherical design of harmonic index $t$} on $\Sphere^{n-1}$, if and only if
\begin{equation}\label{equ:star} \tag{$\ast$}
\sum_{\mathbf{x}\in Y}f(\mathbf{x})=0
\end{equation}
for all real homogeneous harmonic polynomial $f(x_1,\ldots ,x_n)$ of degree exactly $t$.
\end{definition}

More generally we have the following definition.

\begin{definition}[Spherical designs of harmonic index $T$]\label{dfe:1.3}
Let $T$ be a subset of $\mathbb{N}$.
A subset $Y\subset  \Sphere^{n-1}$ is called a \textit{spherical design of harmonic index $T$} on $\Sphere^{n-1}$, if and only if
\[\sum_{\mathbf{x}\in Y}f(\mathbf{x})=0 \; \mbox{ for all } f(x_1,\ldots,x_n) \in \mathrm{Harm}_k^n, \mbox{ with } k\in T.\]
\end{definition}

The case of $T$-design with $T=\{1,2,\ldots ,t\}$ corresponds to a usual spherical $t$-design, and the case $T=\{t\}$ corresponds to a spherical design of harmonic index $t$.

The purpose of this paper is to study spherical designs of harmonic index $t$ as well as harmonic index $T$ for some $T$, and convince the reader that these are interesting mathematical objects.
% The purpose of this paper is to push the study of spherical designs of harmonic index $t,$ started in Bannai--Okuda--Tagami \cite{BOT2015} further more.
Our main concerns are Fisher type lower bounds for spherical designs of harmonic index $t$ and $T$, as well as the classification problems of so called `tight' designs.
Here `tight' means those that satisfy the lower bound in a Fisher type inequality.
In Section~\ref{sec:LP} we present general observation about the linear programming method for spherical designs of harmonic index $T$.
In Section~\ref{sec:Fisherandtight} we formulate Fisher type inequalities and tight spherical designs of harmonic index $t$ and $T$.
In the subsequent sections we will study some specific problems.
In Section~\ref{sec:nonexistence} the complete non-existence of tight spherical designs of harmonic index 6 and 8 is proved.
(Note that case of $t=4$ was already settled by Okuda--Yu \cite{OY2015} in a beautiful way by applying the SDP (semidefinite programming) to the existence problem of equiangular lines.
Also note that our proofs for $t=6$ and $t=8$ are obtained in an elementary level without recourse to such deeper consideration as SDP.)
In Section~\ref{sec:asymptotic} we show the asymptotic non-existence of harmonic index $2e$ case for general $e\geq 3$.
Then we turn our attention to the case of $T=\{t_1, t_2\}$.
The center of our study is for $T=\{8,4\}$ in Section~\ref{sec:84}.
In Section~\ref{sec:several} we study the cases $T= \{8,2\}$, $\{8,6\}$, $\{6,2\}$, $\{6,4\}$, as well as $\{10,6,2\}$, and $\{12,8,4\}$.
We conclude the paper in Section~\ref{sec:remark} by mentioning some concluding remarks.

As we mentioned in Abstract, the technique we used are:
(i) the linear programming method by Delsarte, (ii) the detailed information on the locations of the zeros as well as the local minimum values of Gegenbauer polynomials, (iii) the generalization by Hiroshi Nozaki of the Larman-Rogers-Seidel theorem on $2$-distance sets to $s$-distance sets, (iv) the theory of elliptic diophantine equations, and (v) the semidefinite programming method of eliminating some $2$-angular line systems for small dimensions.

\section{Linear programming method for spherical designs of harmonic index $T$}
\label{sec:LP}

\noindent
In this section we consider the linear programming method for spherical designs of harmonic index $T$.
We also introduce basic terminology and notation which will be used in the subsequent sections.

Let $Q_{n,k}(x)$ be the \textit{Gegenbauer polynomial} of degree $k$ in one indeterminate $x$ as introduced in \cite[Definition 2.1]{DGS1977}.
Recall how $Q_{n,k}(x)$ are normalized \cite[Theorem 2.4, Theorem 3.2]{DGS1977}:
\[ Q_{n,k}(1) = \mathrm{dim}\, \mathrm{Harm}_k^n = \binom{n+k-1}{n-1} - \binom{n+k-3}{n-1} =: h_k^n \, . \]
The Gegenbauer polynomials $Q_{n,k}(x)$ are orthogonal polynomials on the closed interval $[-1,1]$ with the weight function $(1-x^2)^{(n-3)/2}$, i.e.,
\[ \int_{-1}^{1} Q_{n,k}(x) \, Q_{n,\ell}(x) \, (1-x^2)^{\frac{n-3}{2}}\; dx = a_{n,k} \delta_{k,\ell} \, ,\]
where $a_{n,k}$ is some constant depending on $n$ and $k$, and $\delta_{k,\ell}$ is the Kronecker delta.
From this orthogonality it can be well established that to any real polynomial $F(x)$ of degree $r$ we can associate its \textit{Gegenbauer expansion}
\begin{equation}\label{eq:Gexp}
 F(x) = \sum_{k=0}^{r} f_k \, Q_{n,k}(x)\, ,
\end{equation}
where the \textit{Gegenbauer coefficients $f_k$} are as follows:
\[ f_k = \frac{1}{a_{n,k}} \int_{-1}^{1} F(x)\, Q_{n,k}(x)\, (1-x^2)^{\frac{n-3}{2}} \; dx.\]

We denote by $\mathbf{x}\cdot \mathbf{y}$ the standard inner product of $\mathbf{x}$ and $\mathbf{y}$ in $\mathbb{R}^n$.
For a subset $Y \subseteq \mathbb{R}^n$ we set $I(Y) := \{ \mathbf{x}\cdot \mathbf{y} \, : \, \mathbf{x},\mathbf{y} \in Y,\; \mathbf{x}\neq \mathbf{y}\}$.
If $\{ e_{k,1}, \ldots, e_{k,h_k^n} \}$ is an orthonormal basis for $\mathrm{Harm}_k^n$ with respect to the inner product $\langle f,g\rangle = \frac{1}{\left|\Sphere^{n-1}\right|} \int_{\mathbf{x}\in\Sphere^{n-1}} f(\mathbf{x})g(\mathbf{x}) d\sigma(\mathbf{x})$ in $\mathrm{Harm}_k^n$, then the \textit{addition formula} says, for any $\mathbf{x},\mathbf{y} \in \Sphere^{n-1}$,
\[ Q_{n,k}(\mathbf{x}\cdot \mathbf{y}) = \sum_{i=1}^{h_k^n} e_{k,i}(\mathbf{x}) e_{k,i}(\mathbf{y}).\]
From the addition formula we have, for any subset $Y\subseteq \Sphere^{n-1}$,
\[ \begin{array}{rcl}
M_k(Y) & := & {\displaystyle \sum_{\mathbf{x},\mathbf{y} \in Y} Q_{n,k}(\mathbf{x}\cdot \mathbf{y}) = \sum_{\mathbf{x},\mathbf{y} \in Y} \sum_{i=1}^{h_k^n} e_{k,i}(\mathbf{x}) e_{k,i}(\mathbf{y}) } \\
& & \\
 & = & {\displaystyle \sum_{i=1}^{h_k^n} \sum_{\mathbf{x},\mathbf{y} \in Y} e_{k,i}(\mathbf{x}) e_{k,i}(\mathbf{y}) = \sum_{i=1}^{h_k^n} \left( \sum_{\mathbf{x} \in Y} e_{k,i}(\mathbf{x}) \right)^2 }. \\
\end{array} \]
Thus we obtain (see Definition~\ref{dfe:1.3} for (M2)) the following two simple observations:
\begin{enumerate}
\item [(M1)] The quantity $M_k(Y)$ is always non-negative;
\item [(M2)] Moreover, $M_k(Y) = 0$ if and only if $Y \subseteq \Sphere^{n-1}$ is a spherical design of harmonic index $k$.
\end{enumerate}

We introduce an identity (see \eqref{eq:mi} below) which turns out to be the main source of Fisher type inequalities.
(See \cite{DGS1977} for the original discussion about so-called `\textit{linear programming bounds}' for spherical designs.)
Suppose $F(x)$ is a non-constant real polynomial of degree $r$ which is of the form \eqref{eq:Gexp}.
For a subset $Y \subseteq \Sphere^{n-1}$, if we calculate $\sum_{\mathbf{x},\mathbf{y} \in Y} F(\mathbf{x}\cdot \mathbf{y})$ in two different ways, then we find
\begin{equation}\label{eq:mi}
\left| Y \right| F(1) + \sum_{\mathbf{x},\mathbf{y}\in Y, \atop \mathbf{x}\neq \mathbf{y}} F(\mathbf{x}\cdot \mathbf{y}) = \left| Y \right|^2 f_0 + \sum_{k=1}^r f_k \, M_k(Y).
\end{equation}

Now suppose $Y \subseteq \Sphere^{n-1}$ is a spherical design of harmonic index $T$.
Observe that if $Y_1, Y_2 \subseteq \Sphere^{n-1}$ are spherical designs of harmonic index $T$ with $Y_1 \cap Y_2 = \emptyset$, then so is $Y_1 \cup Y_2$.
Thus we are interested in finding a lower bound for $\left| Y \right|$.
If $F(x)$ satisfies
\begin{enumerate}
\item [(LP1)] $F(u) \geq 0$ for each $u \in [-1,1]$;
\item [(LP2)] $f_k \leq 0$ for each $k \in \{1,\ldots, r\} \setminus T$,
\end{enumerate}
then we obtain (recall (M1) and (M2)) that
\begin{equation}\label{eq:lpe}
\left| Y \right| F(1) \leq \mbox{L.H.S. of \eqref{eq:mi}}=\mbox{R.H.S. of \eqref{eq:mi}} \leq \left| Y \right|^2 f_0 \, ,
\end{equation}
where the first and second inequalities are due to (LP1) and (LP2), respectively.
Moreover we automatically have $f_0 >0$, since by (LP1) the integrand of the following integral
\[ f_0 = \frac{1}{a_{n,0}} \int_{-1}^{1} F(x) \, (1-x^2)^{\frac{n-3}{2}}\; dx \]
is non-negative, and $a_{n,0} = \int_{-1}^{1} (1-x^2)^{\frac{n-3}{2}}\; dx$ is also positive.
Finally we conclude from \eqref{eq:lpe}
\begin{equation}\label{eq:mie}
\left| Y \right| \geq \frac{F(1)}{f_0}.
\end{equation}

The inequality \eqref{eq:mie} leads to the following natural question:
What is the optimal choice of $F(x)$?
In other words, what $F(x)$ guarantees that $F(1) / f_0$ becomes largest?
However, this problem is normally not easy to solve.
If the degree $r$ of $F(x)$ is not determined, then our linear programming is in fact infinite, i.e., the variables are $f_0 > 0$, $f_1, f_2, \ldots, u$ ($u$ is the variable appeared in (LP1)).
Even if we consider $F(x)$ of fixed degree, this problem is not easy in general.

Another important question is as follows:
When the equality for $\left| Y \right| \geq F(1)/f_0$ holds?
We know from \eqref{eq:mi} and \eqref{eq:lpe} that the equality holds if and only if $\sum_{\mathbf{x},\mathbf{y}\in Y,\mathbf{x}\neq \mathbf{y}} F(\mathbf{x}\cdot \mathbf{y}) = 0$ and $\sum_{k=1}^r f_k \, M_k(Y) = 0$ if and only if the following two conditions hold:
\begin{enumerate}
\item [(E1)] $F(\alpha) = 0$ for all $\alpha \in I(Y)$;
\item [(E2)] For each $k \in \{1, \ldots, r\} \setminus T$, if $M_k(Y) \neq 0$ then $f_k = 0$.
\end{enumerate}

The next observation is based upon the preceding discussion.
\begin{proposition}\label{ourtestfunction}
Let $Y$ be a spherical design of harmonic index $T$ on $\Sphere^{n-1}$.
Suppose that $F(x)$ satisfies $\mathrm{(LP1)}$ and $\mathrm{(LP2)}$, and has the following form:
\[ F(x) = c + \sum_{k \in T} f_k \, Q_{n,k}(x) , \]
where $c$ is a constant.
Then the equality for \eqref{eq:mie} holds if and only if $F(\alpha) = 0$ for all $\alpha \in I(Y)$, i.e., $I(Y)$ is contained in the set of roots of $F(x)$.
\end{proposition}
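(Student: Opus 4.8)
The plan is to reduce everything to the equality characterization derived immediately before the proposition: equality in \eqref{eq:mie} holds if and only if both (E1) and (E2) are satisfied. Condition (E1) is exactly the asserted condition that $F(\alpha)=0$ for every $\alpha\in I(Y)$, so the only real work is to show that, under the prescribed shape of $F$, condition (E2) holds automatically and hence imposes nothing beyond (E1).

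First I would read off the Gegenbauer coefficients of $F$ from its given form $F(x)=c+\sum_{k\in T}f_k\,Q_{n,k}(x)$. Since $Q_{n,0}\equiv 1$, the expansion \eqref{eq:Gexp} of $F$ has constant term $f_0=c$, has (possibly) nonzero coefficients only at the indices $k\in T$, and has coefficient $0$ at every index $k\ge 1$ with $k\notin T$; in particular $f_k=0$ for each $k\in\{1,\dots,r\}\setminus T$. I would then invoke this vanishing to dispose of (E2): that condition requires, for each $k\in\{1,\dots,r\}\setminus T$, that $M_k(Y)\neq 0$ imply $f_k=0$, but since $f_k=0$ holds unconditionally for all such $k$, the implication is vacuously true no matter what the values $M_k(Y)$ are. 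Thus (E2) is satisfied for every $Y$, and the equality condition collapses to (E1) alone, which is precisely the statement that $I(Y)$ is contained in the root set of $F$. (The nonconstancy of $F$ together with (LP1) guarantees $f_0>0$, as noted before the proposition, so that \eqref{eq:mie} is a genuine bound and the notion of equality is meaningful.)

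There is essentially no serious obstacle here; the single point requiring care is to confirm that the constant term $c$ contributes solely to $f_0$ and introduces no spurious off-$T$ Gegenbauer components. Since $c=c\cdot Q_{n,0}$ affects only the degree-$0$ coefficient, this is immediate, and the argument is complete.
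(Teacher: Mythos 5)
Your proposal is correct and follows exactly the route the paper intends: the proposition is stated as an immediate consequence of the equality characterization (E1)--(E2), and your observation that the prescribed form of $F$ forces $f_k=0$ for all $k\in\{1,\ldots,r\}\setminus T$ (so that (E2) holds vacuously and equality collapses to (E1)) is precisely the ``preceding discussion'' the paper invokes.
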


\section{Fisher type lower bounds and tight spherical designs of harmonic index $t$ (or $T$)}
\label{sec:Fisherandtight}

\noindent
In the first section we defined the concept of spherical designs of harmonic index $t$ or more generally for $T$.
This notion was already essentially defined in the literature, as ``a spherical design which admits indices $T$''.
(See Delsarte-Seidel \cite{DS1989}, say.)
On the other hand, the terminology of spherical design of index $t$ is already defined as ``a spherical design for which the equality \eqref{equ:star} holds for any \emph{homogeneous harmonic polynomials of degree t}'', say in \cite{DS1989}, \cite{LV1993}, etc.
In order to avoid the confusion with these terminologies, we use the term `spherical designs of harmonic index $t$ (or $T$)'.
It seems that no systematic study of spherical designs of harmonic index $t$ has been made, before Bannai--Okuda--Tagami \cite{BOT2015}.
In \cite[Theorem 1.2]{BOT2015}, a Fisher type lower bound was obtained for spherical designs of harmonic index $t$.

What would be the natural Fisher type lower bound for spherical designs of harmonic index $T$?
We propose the following approach.
Suppose $T=\{t_1 (=2e),t_2, \ldots ,t_{\ell} \}$ with $t_1 > \cdots >t_{\ell}$.
Consider the linear combination
\[ L(x) = Q_{n,2e}(x) + f_{t_2}Q_{n,t_2}(x) + \cdots + f_{t_{\ell}}Q_{n,t_{\ell}}(x), \]
and let $L(x)$ take the minimum value $-c_{n,T}$ only at $\ell$ non-negative points.
It is not clear when such coefficients $f_{t_2}, \ldots, f_{t_{\ell}}$ exist, but we are interested in the case where they exist.
If the coefficients $f_{t_2}, \ldots, f_{t_{\ell}} $ are uniquely determined, then we take (with such coefficients)
\[  F(x) = c_{n,T} + L(x) \quad \mbox{ where } c_{n,T} := - \min L(x).\]
Since this $F(x)$ clearly satisfies the conditions (LP1) and (LP2) in Section~\ref{sec:LP}, we obtain
\[ \left| Y \right| \geq \frac{F(1)}{c_{n,T}} =: b_{n, T}.\]
In this paper we call $Y\subset \Sphere^{n-1}$ a tight spherical design of harmonic index $T$, if the equality is attained in the above Fisher type inequality.
We emphasize that our definition of tight designs of harmonic index $T$ is a conventional definition, but we believe this definition is still meaningful.

\section{The non-existence of tight spherical designs of harmonic index 6 and 8}
\label{sec:nonexistence}

\noindent
In this section we will prove the non-existence of tight spherical designs of harmonic index $t=6$ and $t=8$.

Bannai--Okuda--Tagami \cite{BOT2015} (2015) gave Fisher type lower bounds for spherical designs of harmonic index $t$.
\begin{theorem}[{\cite[Theorem 1.2]{BOT2015}}]\label{theo:2.1}
Let $Y \subseteq \Sphere^{n-1}$ be a spherical design of harmonic index $t$.
If we put $c_{n,t}=-\min Q_{n,t}(x)$, then the following inequality holds:
\begin{equation}\label{equ:4.1}
\left| Y \right| \geq b_{n,t} := 1+\frac{Q_{n,t}(1)}{c_{n,t}}.
\end{equation}
Moreover, the equality holds in $\eqref{equ:4.1}$ if and only if $Q_{n,t}(\alpha) = -c_{n,t}$ for any $\alpha \in I(Y)$.
\end{theorem}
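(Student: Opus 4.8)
The plan is to apply the general linear programming machinery of Section~\ref{sec:LP} to the single-index case $T = \{t\}$ with one carefully chosen test function. First I would set
\[ F(x) = c_{n,t} + Q_{n,t}(x), \qquad c_{n,t} = -\min_{x \in [-1,1]} Q_{n,t}(x), \]
and observe that this is precisely of the form treated in Proposition~\ref{ourtestfunction}, namely $F(x) = c + \sum_{k \in T} f_k\, Q_{n,k}(x)$ with $c = c_{n,t}$ and the single summand $f_t\, Q_{n,t}(x)$, $f_t = 1$.

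Next I would verify (LP1) and (LP2). Condition (LP1) is immediate from the definition of $c_{n,t}$: we have $F(x) = Q_{n,t}(x) - \min Q_{n,t}(x) \geq 0$ for all $x \in [-1,1]$. For (LP2) I would read off the Gegenbauer coefficients of $F$ using $Q_{n,0}(x) \equiv 1$, obtaining $f_0 = c_{n,t}$, $f_t = 1$, and $f_k = 0$ for every $k \in \{1, \ldots, t\} \setminus \{t\}$; hence (LP2) holds vacuously, the relevant coefficients vanishing rather than merely being nonpositive. It is also worth recording that $c_{n,t} > 0$: since $Q_{n,t}$ is orthogonal to the constant $Q_{n,0}$ against the positive weight $(1-x^2)^{(n-3)/2}$, its integral against that weight is zero, forcing $Q_{n,t}$ to take negative values on $[-1,1]$, so $\min Q_{n,t} < 0$. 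Thus $f_0 = c_{n,t} > 0$ and the bound below is meaningful.

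With (LP1) and (LP2) established, inequality~\eqref{eq:mie} applies and yields
\[ |Y| \geq \frac{F(1)}{f_0} = \frac{c_{n,t} + Q_{n,t}(1)}{c_{n,t}} = 1 + \frac{Q_{n,t}(1)}{c_{n,t}} = b_{n,t}, \]
which is exactly~\eqref{equ:4.1}. For the equality statement I would invoke Proposition~\ref{ourtestfunction} directly: since $F$ has the required form, equality in~\eqref{eq:mie} holds if and only if $F(\alpha) = 0$ for every $\alpha \in I(Y)$, that is, $Q_{n,t}(\alpha) = -c_{n,t}$ for every $\alpha \in I(Y)$, as claimed.

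Because the substantive work---the identity~\eqref{eq:mi}, the derivation of~\eqref{eq:mie}, and the equality analysis in Proposition~\ref{ourtestfunction}---is already in hand, there is no genuine obstacle here; the whole content of the argument is the choice of test function. The only points needing a moment's care are confirming $c_{n,t} > 0$ and noting that in the single-index case (LP2) imposes no real constraint. One could further remark that $c_{n,t} + Q_{n,t}(x)$ is in fact the \emph{optimal} degree-$t$ function of this shape: among $F = c + Q_{n,t}(x)$ with $F \geq 0$ on $[-1,1]$ the quantity $F(1)/f_0 = 1 + Q_{n,t}(1)/c$ is decreasing in $c$ (as $Q_{n,t}(1) > 0$), so the admissible constant is minimized exactly at $c = c_{n,t}$.
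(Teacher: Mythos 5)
Your proposal is correct and follows essentially the same route as the paper: the paper likewise obtains Theorem~\ref{theo:2.1} by taking $F(x)=c_{n,t}+Q_{n,t}(x)$ as in \eqref{eq:cnt}, invoking (LP1), (LP2) and the bound \eqref{eq:mie}, and reading off the equality condition from Proposition~\ref{ourtestfunction}. Your added checks (that $c_{n,t}>0$ via orthogonality of $Q_{n,t}$ to constants, and that (LP2) holds with vanishing coefficients) are correct refinements of details the paper leaves implicit.
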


In the view point of Section~\ref{sec:LP}, Theorem~\ref{theo:2.1} is the result from putting
\begin{equation}\label{eq:cnt}
F(x) = c_{n,t} + Q_{n,t}(x) \qquad (c_{n,t} := -\min Q_{n,t}(x)),
\end{equation}
and the inequality~\eqref{eq:mie}.
Recall from Proposition~\ref{ourtestfunction} that if $Y\subseteq \Sphere^{n-1}$ is a tight spherical design of harmonic index $t$, then $I(Y)$ is contained in the set of roots of $F(x)$.

Delsarte--Goethals--Seidel \cite[Theorem 4.8]{DGS1977} (1977) gave an upper bound for a spherical $s$-distance set $X\subseteq \Sphere^{n-1}$, namely, $\left| X \right| \leq \binom{n+s-1}{n-1}+\binom{n+s-2}{n-1}$.
In particular, for a spherical $2$-distance set $X \subseteq \Sphere^{n-1}$ with $I(X) = \{ \alpha, \beta \}$ and $\alpha+\beta \geq 0$, a better upper bound $\frac{n(n+1)}{2}$ for $\left| X \right|$ is given by Musin \cite[Theorem 1]{M2009} (2009).

\subsection{The non-existence of tight spherical designs of harmonic index $6$}
In this subsection $Y \subseteq \Sphere^{n-1}$ denotes a spherical design of harmonic index $6$.
The Gegenbauer polynomial $Q_{n,6}(x)$ is
\[\begin{array}{rcl}
Q_{n,6}(x) & = & {\scriptstyle \frac{n(n+2)(n+10)}{6!} \left\{ (n+4)(n+6)(n+8)x^6 -15(n+4)(n+6)x^4 + 45(n+4)x^2-15 \right\} }.\\
\end{array} \]
By taking the largest root for $Q_{n,6}^{\prime}(x)=0$, we get (see e.g. the proof of \cite[Corollary 4.1]{BOT2015}) the point $\alpha$ at which $Q_{n,6}(x)$ takes the minimum value, i.e., $Q_{n,6}(\alpha) = -c_{n,6}$.
The lower bound $b_{n,6}$ can be obtained as well.
The following are our calculation results:
\[ \begin{array}{l}
\alpha^2=\frac{5(n+6)+\sqrt{10(n+3)(n+6)}}{(n+6)(n+8)}, \\
c_{n,6}=-\frac{n(n+2)(n+10) \left( 2(n-2)(n+3)(n+6)+(n+3)(n+4)\sqrt{10(n+3)(n+6)}\right)}{36(n+6)(n+8)^2}, \\
b_{n,6}=\frac{(n+4)\left( 20\sqrt{10(n+3)(n+6)}+(n+3)(n+6)(n^2+9n-12) \right)}{20\left( 2(n-2)(n+6)+(n+4)\sqrt{10(n+3)(n+6)} \right)}.
\end{array}\]

It is not difficult to check that $|Y|=b_{n,6}> \frac{n(n+1)}{2}$ if $n\geq 37$.
Moreover, $b_{2,6}=2$ and $b_{24,6}=231$ are the only two cases for which $b_{n,6} \in \mathbb{Z}$ when $n \leq 36$.

\begin{remark}\label{remark:4.2}
We should remark that not all the roots of $F(x)$ in \eqref{eq:cnt} will necessarily appear in $I(Y)$ when $n$ is small.
Consider the case $n=2$.
Recall that $b_{2,2e} = 2$ is proved for general $e$ in \cite[p. 6]{BOT2015}.
Let $y_1,y_2$ be two unit vectors in $\Real^2$ with angular $\theta =j \pi/{2e}$ for odd $j$.
Then, by the argument in \cite[p. 2]{BOT2015}, $Y=\{ y_1, y_2 \}$ is a tight spherical design of harmonic index $2e$ on $\Sphere^1$.
\end{remark}

Larman--Rogers--Seidel (1977) proved the following fact.
\begin{theorem}[{\cite[Theorem 2]{LRS1977}}]\label{theo:4.5}
Let $X$ be a $2$-distance set in $\mathbb{R}^n$ with Euclidean distances $c$ and $d$ $(c < d)$.
If $|X|>2n+3$, then we have
\[ \frac{c^2}{d^2} = \frac{(k-1)}{k} \]
for some integer $k$ with $2 \leq k \leq \frac{1+\sqrt{2n}}{2}$.
\end{theorem}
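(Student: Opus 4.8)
The plan is to encode the two-distance condition as a spectral condition on a graph and then force integrality. Write $X=\{x_1,\dots,x_m\}$ with $m>2n+3$, let the two distances be $c<d$, and let $\Gamma$ be the graph on $X$ with $x_i\sim x_j$ iff $|x_i-x_j|=c$; let $A\in\{0,1\}^{m\times m}$ be its adjacency matrix. First I would introduce the functions $f_i(x)=\dfrac{|x-x_i|^2-d^2}{c^2-d^2}$. Each $f_i$ is a linear combination of $1,x_1,\dots,x_n,|x|^2$, so all the $f_i$ lie in a space of dimension at most $n+2$; on the other hand a direct evaluation gives $f_i(x_j)=(A+kI)_{ij}$ with $k:=\dfrac{d^2}{d^2-c^2}$. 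Hence the symmetric matrix $A+kI$ has rank at most $n+2$, so $-k$ is an eigenvalue of $A$ of multiplicity $f\ge m-(n+2)$.

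Next I would show that $k\in\mathbb{Z}$. Since $A$ is an integer matrix, $-k$ is an algebraic integer, so it suffices to prove that $-k$ is rational. Suppose not. The characteristic polynomial of $A$ is a product of powers of distinct $\mathbb{Q}$-irreducible polynomials, and the factor vanishing at $-k$ has degree at least $2$; as $A$ is symmetric (so algebraic and geometric multiplicities agree), every Galois conjugate of $-k$ is again an eigenvalue of $A$ with the same multiplicity $f\ge m-n-2$. In particular $A$ has at least two distinct eigenvalues of multiplicity $\ge m-n-2$. It also has at least three distinct eigenvalues in total, since a graph with at most two distinct eigenvalues is a disjoint union of complete graphs of equal size and hence has only integer eigenvalues. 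Counting dimensions, $m\ge 2(m-n-2)+1$, i.e.\ $m\le 2n+3$, contradicting the hypothesis. Therefore $k\in\mathbb{Z}$, and since $0<c<d$ forces $k=\dfrac{d^2}{d^2-c^2}>1$ we obtain $k\ge 2$ and $\dfrac{c^2}{d^2}=\dfrac{k-1}{k}$.

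It remains to bound $k$. Here I would combine the large multiplicity with the trace identities $\mathrm{tr}(A)=0$ and $\mathrm{tr}(A^2)=\sum_i\deg_\Gamma(x_i)$. From $\mathrm{tr}(A^2)=\sum_i\lambda_i^2\ge k^2 f\ge k^2(m-n-2)$ and $\mathrm{tr}(A^2)\le m\Delta$, where $\Delta$ is the maximum degree of $\Gamma$, one gets $k^2\le \dfrac{m\,\Delta}{m-n-2}$; as $m>2n+3$, the fraction $\dfrac{m}{m-n-2}$ is bounded, so the whole estimate hinges on a good upper bound for $\Delta$. The neighbourhood of a vertex, rescaled to the unit sphere, is itself a spherical two-distance set, with the two inner products $\tfrac12$ and $\tfrac{k-2}{2(k-1)}$ completely determined by $k$; showing that such a set has size linear in $n$ yields $\Delta=O(n)$ and hence the bound $k\le\frac{1+\sqrt{2n}}{2}$ after tracking the constants.

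The main obstacle is this last step. The generic rank and trace inequalities only give $k=O(n)$; extracting the sharp constant $\frac{1+\sqrt{2n}}{2}$ requires the linear-in-$n$ control of the neighbourhood sizes (equivalently, of the degrees of $\Gamma$), which is where the geometry of the two fixed inner products must be used carefully. By contrast the integrality half of the argument is robust, and it is exactly there that the threshold $|X|>2n+3$ is consumed.
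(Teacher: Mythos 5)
This statement is quoted in the paper from \cite{LRS1977} and used as a black box; the paper contains no proof of it (the closest internal analogue is the proof of Theorem~\ref{theo:7.2}, which is exactly your integrality mechanism, run for equiangular-type sets). So your proposal has to be measured against the original Larman--Rogers--Seidel argument and Nozaki's generalization (Theorem~\ref{theo:6.1} of the paper).

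Your first two steps are correct and are essentially the classical proof. The evaluation $f_i(x_j)=(A+kI)_{ij}$, the observation that the $f_i$ lie in the $(n+2)$-dimensional span of $1,x_1,\dots,x_n,|x|^2$, the resulting multiplicity $f\ge m-n-2$ of the eigenvalue $-k$, the equal multiplicity of Galois-conjugate eigenvalues of an integral symmetric matrix, the fact that a graph with at most two distinct eigenvalues is a disjoint union of equal cliques (hence has integral spectrum), and the count $m\ge 2(m-n-2)+1$ are all sound; the last count is exactly where the hypothesis $|X|>2n+3$ is spent, and $k=d^2/(d^2-c^2)>1$ then forces $k\ge 2$. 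This half is complete.

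The upper bound $k\le\frac{1+\sqrt{2n}}{2}$ is a genuine gap, and the route you sketch cannot be repaired. First, the lemma you would need --- that a spherical two-distance set with inner products $\tfrac12$ and $\tfrac{k-2}{2(k-1)}$ has size $O(n)$, with a constant small enough to force $\Delta\lesssim n/4$ --- is false. For $k=2$ those inner products are $\tfrac12$ and $0$, which are realized by $\{(e_i+e_j)/\sqrt2 : 1\le i<j\le n\}\subset S^{n-1}$ with $\binom{n}{2}$ points; moreover this very configuration, viewed as a Euclidean two-distance set (with $c^2=2$, $d^2=4$, $k=2$, and $m=\binom{n}{2}>2n+3$ for $n\ge 7$), has $c$-graph equal to the triangular graph, of degree $2n-4$. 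So the true maximum degree can be of order $2n$, and then your chain $k^2\le\frac{m\Delta}{m-n-2}\le 2\Delta$ can never give anything better than $k\le 2\sqrt{n}$, which is weaker than the claimed bound. Second, the known proofs do not bound degrees at all; they exploit the fact that \emph{every} off-diagonal pair is at distance $c$ or $d$. Concretely, let $\overline{A}=J-I-A$ be the adjacency matrix of the $d$-graph and set $D=2(A+kI)-J=(2k-1)I+A-\overline{A}$. Every off-diagonal entry of $D$ is $\pm1$, so $\mathrm{tr}(D^2)=m(2k-1)^2+m(m-1)$ exactly, while $\mathrm{rank}\,D\le n+3$ and $\mathrm{tr}(D)=m(2k-1)$; Cauchy--Schwarz over the nonzero eigenvalues gives $(2k-1)^2\le\frac{(n+3)(m-1)}{m-n-3}$, whose right-hand side is decreasing in $m$ and is about $2n+7$ at the threshold $m=2n+4$. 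This is precisely Nozaki's bound $U(N)$, and Larman--Rogers--Seidel reach the exact constant $\frac{1+\sqrt{2n}}{2}$ by the same computation with a sharper rank bound (centering the configuration lowers the rank to $n+1$, turning $2n+7$ into $2n$). Your one-graph trace estimate $\mathrm{tr}(A^2)\le m\Delta$ discards exactly the ``all off-diagonal entries are $\pm1$'' information that drives the sharp bound.
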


Suppose $n=24$.
Then $Y$ is a spherical $2$-distance set in $\Sphere^{23}$ with $I(Y) = \{ \pm \alpha\}$.
If we put $c=\sqrt{2-2\alpha}$ and $d=\sqrt{2+2\alpha}$, then $c$ and $d$ become the Euclidean distances between two distinct vectors in $Y$.
However, in this case, we obtain $c^2/d^2=1/3$ from easy calculation, contrary to Theorem~\ref{theo:4.5}.
(Note that $b_{24,6} = 231 > 2 \times 24+3 = 51$.)
Hence there exists no tight spherical design of harmonic index $6$ when $n=24$.

\begin{remark}
If $Y$ is a tight spherical design of harmonic index $T$, then $I(Y)$ is contained in the set of roots of $F(x)$.
Suppose $F(x)$ has $s$ roots, i.e., $\left| I(Y) \right| \leq s$, then $Y$ is a spherical $s^{\prime}$-distance set with $s^{\prime} \leq s$.
It is known that the upper bound for spherical $s^{\prime}$-distance set is $A(n,s^{\prime}):=\binom{n+s^{\prime}-1}{n-1}+\binom{n+s^{\prime}-2}{n-1}$. Since $\left| Y \right|=b_{n,T} \leq A(n,s^{\prime})$ and $A(n,s^{\prime}) \leq A(n,s)$, we get a less restrictive condition on $n$ if $s^{\prime}=s$. This implies that it is easier to prove the non-existence of $s^{\prime}$-distance set with $s^{\prime} < s$. Therefore, in what follows, a tight spherical design of harmonic index $T$ is equivalent to a spherical $s$-distance set.
\end{remark}
\subsection{The non-existence of tight spherical designs of harmonic index $8$}
In this subsection $Y \subseteq \Sphere^{n-1}$ denotes a spherical design of harmonic index $8$.
The Gegenbauer polynomial $Q_{n,8}(x)$ is
\[ \begin{array}{rcl}
Q_{n,8}(x)&=& {\scriptstyle \frac{n(n+2)(n+4)(n+14)}{8!} \big\{ (n+6)(n+8)(n+10)(n+12)x^8 -28(n+6)(n+8)(n+10)x^6} \\
& & \quad\qquad\qquad\qquad {\scriptstyle +210(n+6)(n+8)x^4 - 420(n+6)x^2+105\big\} }.\\
\end{array} \]
As in the preceding subsection we can obtain $\alpha$, $c_{n,8}$, and also
\begin{equation}\label{eq:asymbn8}
b_{n,8}=\frac{1}{252 \times 12.03144913\cdots}\times n^4(1+o(1)).
\end{equation}
It can be checked that $\left| Y \right| = b_{n,8} > \frac{n(n+1)}{2}$ if $n \geq 20$ and, if $n \leq 19$,  the only integral value is $b_{2,8}=2$.
By a similar argument as in Remark~\ref{remark:4.2} one trivial example exists when $n=2$.
\begin{remark}
We do not give the formulas of $\alpha$, $b_{n,8}$ and $c_{n,8}$ explicitly, since they are extremely complicated.
Here, $b_{n,8} > \frac{n(n+1)}{2}$ is checked from the formula of $b_{n,8}$ rather than from the asymptotic form \eqref{eq:asymbn8}.
\end{remark}

\section{The asymptotic non-existence of tight spherical designs of harmonic index $2e$ for general $e$}
\label{sec:asymptotic}

\noindent
In this section we consider the existence of tight spherical designs of harmonic index $2e$ for $e\geq 5$, since the cases $e=2,3,4$ were already treated.
Our main result in this section is the following theorem.

\begin{theorem}\label{theo:5.1}
Let $e \geq 2$ be fixed.
Then there exist positive constants $A_{2e}$ and $B_{2e}$ such that
\[\lim_{n\rightarrow \infty}\frac{c_{n, 2e}}{n^e}=A_{2e} \quad\mbox{ and }\quad \ \lim_{n\rightarrow \infty}\frac{b_{n, 2e}}{n^e}=B_{2e},\]
where $A_{2e}$ and $B_{2e}$ depend only on $e$.
Therefore,
\[c_{n,2e}=A_{2e}n^e(1+o(1)) \quad\mbox{ and }\quad \ b_{n,2e}=B_{2e}n^e(1+o(1)).\]
\end{theorem}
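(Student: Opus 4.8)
The plan is to determine the asymptotics of $c_{n,2e}=-\min_{x\in[-1,1]}Q_{n,2e}(x)$ first, and then read off those of $b_{n,2e}=1+Q_{n,2e}(1)/c_{n,2e}$ using Theorem~\ref{theo:2.1}. The guiding principle is the classical degeneration of ultraspherical polynomials to Hermite polynomials under the rescaling $x=t/\sqrt{n}$. Writing $Q_{n,2e}(x)=\sum_{j=0}^{e}a_{n,j}\,x^{2e-2j}$, one sees from the explicit Gegenbauer formula (as in the displayed expressions for $Q_{n,6}$ and $Q_{n,8}$) that $a_{n,j}$ is a polynomial in $n$ of degree exactly $2e-j$, with $a_{n,0}\sim n^{2e}/(2e)!$. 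Hence, after substituting $x=t/\sqrt{n}$, the $j$-th monomial contributes $a_{n,j}\,n^{-(e-j)}t^{2e-2j}\sim(\text{const})\,n^{e}t^{2e-2j}$, so that
\[ n^{-e}\,Q_{n,2e}(t/\sqrt{n}) \longrightarrow P_e(t) \qquad (n\to\infty), \]
locally uniformly in $t$, where $P_e$ is a fixed even polynomial of degree $2e$ with positive leading coefficient $1/(2e)!$ (a positive multiple of the Hermite polynomial $H_{2e}$).

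Next I would locate the minimum. Since $P_e$ has positive leading coefficient and degree $2e$, it tends to $+\infty$ as $|t|\to\infty$, so $m_e:=\min_{t\in\Real}P_e(t)$ is attained at some finite $t_0$; moreover $m_e<0$, because by Hurwitz's theorem the $2e$ real zeros of $Q_{n,2e}$, rescaled by $\sqrt{n}$, converge to $2e$ real zeros of $P_e$, which therefore takes negative values. The heart of the argument is then to prove
\[ \min_{x\in[-1,1]}Q_{n,2e}(x) = n^{e}\bigl(m_e+o(1)\bigr). \]
The upper bound is immediate by evaluating at $x=t_0/\sqrt{n}$ and invoking the limit. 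For the matching lower bound one must confine the minimizer to the window $|x|\le\rho/\sqrt{n}$ for a fixed $\rho$: since all zeros of $Q_{n,2e}$ lie there (their rescalings being bounded by the convergence above), and since $Q_{n,2e}$ is even of degree $2e$ and monotone beyond its largest zero with $Q_{n,2e}(\pm1)=h_{2e}^n>0$, no local minimum is lost outside this window. On the compact set $|t|\le\rho$ the convergence is uniform, which yields the lower bound. Consequently $c_{n,2e}=(-m_e)\,n^e(1+o(1))$, and $A_{2e}:=-m_e>0$ depends only on $e$.

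Finally I would transfer this to $b_{n,2e}$. Since $Q_{n,2e}(1)=h_{2e}^n=\binom{n+2e-1}{2e}-\binom{n+2e-3}{2e-2}$ is a polynomial in $n$ of degree $2e$ with leading coefficient $1/(2e)!$, we have $Q_{n,2e}(1)=\tfrac{n^{2e}}{(2e)!}(1+o(1))$. Combining this with the asymptotics of $c_{n,2e}$ gives
\[ b_{n,2e}=1+\frac{Q_{n,2e}(1)}{c_{n,2e}} = \frac{n^{2e}/(2e)!}{A_{2e}\,n^{e}}\,(1+o(1)) = B_{2e}\,n^{e}(1+o(1)), \qquad B_{2e}:=\frac{1}{(2e)!\,A_{2e}}>0, \]
so both limits hold with constants depending only on $e$, as claimed. (As a sanity check, for $e=3$ the given formulas yield $c_{n,6}\sim\tfrac{2+\sqrt{10}}{36}n^3$, and for $e=4$ one recovers the stated $b_{n,8}\sim n^4/(252\times 12.03\cdots)$.)

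The step I expect to be the main obstacle is the lower bound in the second paragraph, namely justifying that the global minimum over the \emph{entire} interval $[-1,1]$ is captured by the rescaled limit and does not migrate to scales $|x|\gg 1/\sqrt{n}$. This requires a quantitative grip on the extreme zeros of $Q_{n,2e}$ (that they are $O(1/\sqrt{n})$) together with the monotonicity of $Q_{n,2e}$ beyond its largest zero. The cleanest route is the Mehler--Heine/Hermite degeneration of ultraspherical polynomials, which simultaneously supplies the locally uniform limit $P_e$, the clustering of the zeros, and—via Hurwitz's theorem—the negativity $m_e<0$ that makes $A_{2e}$ and $B_{2e}$ strictly positive.
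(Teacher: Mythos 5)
Your proposal is correct, and it rests on the same core idea as the paper's own proof: the Gegenbauer-to-Hermite scaling limit under $x\mapsto t/\sqrt{n}$, followed by $b_{n,2e}=1+Q_{n,2e}(1)/c_{n,2e}$ with $Q_{n,2e}(1)=\frac{n^{2e}}{(2e)!}(1+o(1))$, which forces $B_{2e}=\frac{1}{(2e)!\,A_{2e}}$ exactly as you write. The differences are in execution, and they are worth recording. The paper quotes Szeg\"{o}'s limit $\lim_{\lambda\to\infty}\lambda^{-t/2}C_t^{\lambda}(\lambda^{-1/2}x)=H_t(x)/t!$ together with $Q_{n,t}(x)=\frac{n+2t-2}{n-2}C_t^{(n-2)/2}(x)$, then differentiates the limit relation (justified by uniform convergence of the derivatives) to conclude that the rescaled minimizer converges to the largest zero $x_1$ of $H_{2e-1}$; this yields the explicit constants $A_{2e}=-\frac{H_{2e}(x_1)}{2^e(2e)!}$ and $B_{2e}=-\frac{2^e}{H_{2e}(x_1)}$. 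You instead obtain the scaling limit by elementary degree-counting on the coefficients, and you handle the step you rightly single out as the main obstacle --- that the global minimum over all of $[-1,1]$ stays in the $O(1/\sqrt{n})$ window --- via Hurwitz (all $2e$ zeros are $O(1/\sqrt{n})$), the fact that the critical points of a real-rooted polynomial lie in the convex hull of its zeros, and positivity of $Q_{n,2e}(\pm 1)=h_{2e}^n$. This localization argument is sound and is in fact spelled out more carefully than in the paper, which passes from $\lim_n n^{-e}\min Q_{n,2e}$ to the value at the rescaled $x_1$ rather tersely; the price is that your $A_{2e}=-m_e$ stays abstract rather than being expressed through Hermite values. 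Two small points: your $P_e(t)$ is a positive multiple of $H_{2e}(t/\sqrt{2})$ (Hermite with rescaled argument, because of the factor $\sqrt{2}$ between your scaling and Szeg\"{o}'s), which is all your argument needs; and the negativity $m_e<0$ should be credited to the identification of $P_e$ with a rescaled $H_{2e}$ (whose $2e$ zeros are simple), since Hurwitz alone would permit the zeros of $Q_{n,2e}$ to coalesce in pairs and leave a nonnegative limit polynomial.
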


\begin{corollary}
Let $e \geq 3$ be fixed.
If $n$ is sufficiently large, then there exist no tight spherical designs of harmonic index $2e$.
\end{corollary}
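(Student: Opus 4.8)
The plan is to show that for $e\geq 3$ a tight spherical design of harmonic index $2e$ is forced to be a spherical $2$-distance set, and then to contradict Musin's upper bound for such sets by invoking the growth rate established in Theorem~\ref{theo:5.1}. First I would record the structure of the test function. Since $2e$ is even, the Gegenbauer polynomial $Q_{n,2e}(x)$ is an even polynomial, so its minimum value $-c_{n,2e}$ is attained at a pair of antipodal abscissae $\pm\alpha$. Using the detailed information on the zeros and local minima of $Q_{n,2e}$ (the same information underlying the proof of Theorem~\ref{theo:5.1}, and exactly as in the $t=6$ computation where one selects the largest root of $Q_{n,6}^{\prime}(x)=0$), I would argue that for all sufficiently large $n$ this global minimum is attained only at the single pair $\pm\alpha$, with $\alpha$ the largest positive critical point. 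Consequently $F(x)=c_{n,2e}+Q_{n,2e}(x)$ has precisely the two roots $\pm\alpha$.

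By Theorem~\ref{theo:2.1} (equivalently Proposition~\ref{ourtestfunction}), if $Y\subseteq\Sphere^{n-1}$ is a tight spherical design of harmonic index $2e$, then $I(Y)$ is contained in the set of roots of $F(x)$; hence $I(Y)\subseteq\{\alpha,-\alpha\}$ and $Y$ is a spherical $2$-distance set. The two inner products satisfy $\alpha+(-\alpha)=0\geq 0$, so the bound of Musin \cite[Theorem 1]{M2009} applies and yields
\[ \left| Y \right| \leq \frac{n(n+1)}{2}. \]
On the other hand, tightness means $\left| Y \right| = b_{n,2e}$, and Theorem~\ref{theo:5.1} gives $b_{n,2e}=B_{2e}\,n^{e}(1+o(1))$ with $B_{2e}>0$. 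For $e\geq 3$ this grows strictly faster than $n(n+1)/2$, since
\[ \frac{b_{n,2e}}{n(n+1)/2} = \frac{B_{2e}\,n^{e}(1+o(1))}{n(n+1)/2} = 2B_{2e}\,n^{e-2}(1+o(1)) \longrightarrow \infty \]
as $n\to\infty$. Therefore $b_{n,2e}>n(n+1)/2$ once $n$ is large enough, contradicting the previous inequality; hence no tight spherical design of harmonic index $2e$ can exist for large $n$.

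I expect the main obstacle to be the first step, namely certifying that the tight design is genuinely a $2$-distance set. One must rule out the possibility that the global minimum of $Q_{n,2e}$ is attained at more than one antipodal pair of abscissae, for then $I(Y)$ would be larger, $Y$ would only be an $s$-distance set with $s$ up to $e$, and the Delsarte--Goethals--Seidel bound of order $n^{s}$ would no longer contradict $b_{n,2e}\sim B_{2e}n^{e}$. This is precisely where the fine control of the locations of the zeros and local minimum values of Gegenbauer polynomials enters, and it is the same asymptotic analysis that produces the constants $A_{2e},B_{2e}$ in Theorem~\ref{theo:5.1}. Finally, the hypothesis $e\geq 3$ is essential to the growth comparison: when $e=2$ both $b_{n,4}$ and $n(n+1)/2$ are of order $n^{2}$, so the simple comparison fails and that case must be, and was, treated by separate (SDP) methods.
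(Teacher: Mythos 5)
Your proof is correct and follows essentially the same route as the paper: tightness forces $I(Y)\subseteq\{\pm\alpha\}$, so $Y$ is a spherical $2$-distance set subject to Musin's bound $|Y|\leq n(n+1)/2$, which contradicts $b_{n,2e}=B_{2e}n^{e}(1+o(1))$ from Theorem~\ref{theo:5.1} once $e\geq 3$ and $n$ is large. The only difference is that you explicitly justify the uniqueness of the minimizing antipodal pair of $Q_{n,2e}$ (via the monotonicity of the relative extrema of Gegenbauer polynomials, as in \cite{BOT2015}), a point the paper's proof takes for granted.
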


\begin{proof}
If $Y$ is a tight spherical design of harmonic index $2e$, then $I(Y) \subseteq \{\pm \alpha\}$ for some $\alpha>0$, and so $\left| Y \right| \leq \frac{n(n+1)}{2}$.
On the other hands, if $n$ is sufficiently large, then Theorem~\ref{theo:5.1} implies
\[ \left| Y \right| = b_{n,2e}=B_{2e}n^e(1+o(1)),\]
a contradiction.
\end{proof}

\begin{proof}[Proof of Theorem~\ref{theo:5.1}]
Szeg\"{o} \cite[p. 107]{S2003} gave the asymptotic property of Gengebauer polynomial $C_t^{\lambda}(x)$:
\[\lim_{\lambda\rightarrow \infty} \lambda^{-\frac{t}{2}}C_t^{\lambda}(\lambda^{-\frac{1}{2}} x) = \frac{H_{t}(x)}{t!},\]
where $H_t(x)$ is the Hermite polynomial of degree $t$.\\
Recall that if $n\geq 3$ then $Q_{n,t}(x)=\frac{n+2t-2}{n-2}C_t^{(n-2)/2}(x)$.
(See e.g. \cite[p. 365]{DGS1977}.)
Putting $\lambda=\frac{n-2}{2}$ and $t = 2e$, we have
\[ \begin{array}{rcl}
{\displaystyle \lim_{\lambda\rightarrow \infty} \lambda^{-\frac{t}{2}}C_t^{\lambda}(\lambda^{-\frac{1}{2}}x)} & = & {\displaystyle \lim_{n\rightarrow \infty} \left( \frac{n-2}{2}\right)^{-e}\frac{n-2}{n+4e-2} \, Q_{n,2e}(\sqrt{\frac{2}{n-2}}\, x ) } \\
 & & \\
 & = & {\displaystyle 2^e\lim_{n\rightarrow \infty} n^{-e}Q_{n,2e}(\sqrt{\frac{2}{n-2}}\, x)}.
\end{array} \]
Set $P_{n,e}(x)=n^{-e}Q_{n,2e}(\sqrt{\frac{2}{n-2}}\, x)$ for simplicity.
Then we have
\begin{equation}\label{equ:5.1}
2^e\lim_{n\rightarrow \infty} P_{n,e}(x)=\frac{H_{2e}(x)}{(2e)!}.
\end{equation}
Take the derivative with respect to $x$ on both sides of (\ref{equ:5.1}).
Since $P^{\prime}_{n,e}(x)$ uniformly converges to $\frac{2^e}{(2e-1)!}x^{2e-1}$ for fixed $e$, we get the following result:
\[
2^e \lim_{n\rightarrow \infty} \frac{d}{dx} P_{n,e}(x)=2^e\frac{d}{dx} \left( {\lim_{n\rightarrow \infty} P_{n,e}(x)} \right)=\frac{H^{\prime}_{2e}(x)}{(2e)!}=\frac{4e}{(2e)!}{H_{2e-1}(x)},
\]
where the last equality is due to the property $H_t^{\prime}(x)=2tH_{t-1}(x)$.
Let $x_1$ be the largest zero of $H_{2e-1}(x)$.
Then
\[2^e \lim_{n\rightarrow \infty}  \frac{d}{dx}\left( n^{-e}Q_{n,2e}(\sqrt{\frac{2}{n-2}}\, x)\right) \Big|_{x=x_1}=\frac{1}{(2e)!}H^{\prime}_{2e}(x_1)=0.\]
Thus the following equality can be obtained.
\[A_{2e}=-\lim_{n\rightarrow \infty}\frac{\min Q_{n, 2e}(x)}{n^e}=-\lim_{n\rightarrow \infty}\frac{ Q_{n, 2e}(\sqrt{\frac{2}{n-2}}\, x_1)}{n^e}=-\frac{H_{2e}(x_1)}{2^e (2e)!}.\]
Recall that $b_{n,t}=1+\frac{Q_{n,t}(1)}{c_{n,t}}$ and $Q_{n,t}(1) = \binom{n+t-1}{n-1} - \binom{n+t-3}{n-1}$. This implies
\[B_{2e}=\lim_{n\rightarrow \infty}\frac{b_{n, 2e}}{n^e}=\frac{1}{(2e)!A_{2e}}=-\frac{2^e}{H_{2e}(x_1)}.\]
\end{proof}

\begin{remark}
In Theorem~\ref{theo:5.1} we did not give explicit evaluation of $B_{2e}$, but it is possible to give it, since the locations of the zeros of Hermite polynomials and the (local) minimum values of $H_{2e}(x)$ are well studied.
Also, if we want to evaluate $b_{n,2e}$ explicitly from below, rather than evaluating $B_{2e}$, it is also possible, although we will not discuss it in this paper.
For this purpose, the following papers \cite{BDHSS2010}, \cite{DN2010}, \cite{EL1992}, \cite{GL2003}  may be useful to do that.
(It seems there are many literature on this.)
\end{remark}

\section{Tight spherical designs of harmonic index $\{8,4\}$}
\label{sec:84}

\noindent
Consider the case of $T=\{t_1 (=\! 2e),\, t_2\}$ with $t_2=$ even and $t_1 > t_2$.
With the argument in Section~\ref{sec:Fisherandtight}, we are interested in the cases when $Q_{n,2e}(x)+f_{t_2}Q_{n, t_2}(x)$ has the minimum value $-c_{n,T}$ at only two non-negative points $\alpha, \beta$ (with $\alpha>\beta$). Namely, by taking
\[L(x)=Q_{n,2e}(x)+f_{t_2}Q_{n, t_2}(x),\]
we want to determine $f_{t_2}$ such that
\[ Q_{n,2e}(x)+f_{t_2}Q_{n,t_2}(x)=a (x^2-\alpha^2)^2 (x^2-\beta^2)^2-c_{n,T} \]
for some $\alpha, \beta$ and $c_{n,T}$.

\noindent
Suppose $t_1=8$ and $t_2=4$.
Then the problem is to find $f_4$ such that
\begin{equation}\label{equ:6.1}
Q_{n,8}(x)+f_4Q_{n,4}(x)=a(x^2-\alpha^2)^2(x^2-\beta^2)^2-c_{n,T}.
\end{equation}
The Gegenbauer polynomial $Q_{n,4}(x)$ is
\[Q_{n,4}(x)=\frac{n(n+6)}{4!} \left\{ (n+2)(n+4)x^4-6(n+2)x^2+3 \right\}.\]
By comparing the coefficients in \eqref{equ:6.1}, we obtain the following equations:
\[
\begin{array}{l}
{\scriptsize a=\frac{n(n+2)(n+4)(n+6)(n+8)(n+10)(n+12)(n+14)}{8!}, }\\
\\
{\scriptsize 2a(\alpha^2+\beta^2)=\frac{28n(n+2)(n+4)(n+6)(n+8)(n+10)(n+14)}{8!}, }\\
 \\
{\scriptsize  a(\alpha^4+\beta^4+4\alpha^2\beta^2)=\frac{210n(n+2)(n+4)(n+6)(n+8)(n+14)}{8!} +\frac{n(n+2)(n+4)(n+6)}{4!}f_4,}\\
\\
{\scriptsize 2a\alpha^2\beta^2(\alpha^2+\beta^2)=\frac{420n(n+2)(n+4)(n+6)(n+14)}{8!}+\frac{6n(n+2)(n+6)}{4!}f_4, }\\
\\
{\scriptsize a\alpha^4\beta^4-c_{n,T}=\frac{105n(n+2)(n+4)(n+14)}{8!}+\frac{3n(n+6)}{4!}f_4. }
\end{array}
\]
Therefore,
\[ \begin{array}{l}
\ds \alpha^2+\beta^2=\frac{14}{n+12}, \\
 \\
\scriptsize (\alpha^2+\beta^2)^2+2\alpha^2\beta^2=\frac{210}{(n+10)(n+12)}+\frac{1680}{(n+8)(n+10)(n+12)(n+14)}f_4,\\
\\
\scriptsize \alpha^2\beta^2(\alpha^2+\beta^2)=\frac{210}{(n+8)(n+10)(n+12)}+\frac{5040}{(n+4)(n+8)(n+10)(n+12)(n+14)}f_4.
\end{array} \]
We have
\[ f_4=\frac{(n+4)(n+5)(n+14)}{60(n+12)}.\]
Hence $\alpha^2$ and $\beta^2$ are the solutions of the following quadratic equation in the variable $u$:
\[ u^2-\frac{14}{n+12}u +\frac{21}{(n+8)(n+12)}=0.\]
Finally, we obtain
\[
\begin{array}{l}
\ds \alpha^2,\beta^2=\frac{7(n+8) \pm 2\sqrt{7(n+5)(n+8)}}{(n+8)(n+12)},\\
\\
\ds c_{n,T}=\frac{n(n+1)(n+4)(n+5)(n+10)(n+14)}{160(n+8)(n+12)},\\
\\
\ds b_{n,T}=\frac{1}{252}(n+1)(n+2)(n+5)(n+6).
\end{array}
\]
If a tight spherical design of harmonic index $\{8,4\}$ exists, then it is a spherical $4$-distance set $\{\pm \alpha, \pm \beta\}$.
We define
\[U(h) := \left\lfloor \; \frac{1}{2} + \sqrt{\frac{h^2}{2h-2}+\frac{1}{4}} \;\right\rfloor.\]
For a spherical $s$-distance set, Nozaki (2011) generalized Larman--Rogers--Seidel theorem \cite[Theorem 2]{LRS1977} as follows.
\begin{theorem}[{\cite[Theorem 5.1]{N2011}}]\label{theo:6.1}
Let $Y$ be an $s$-distance set on $\Sphere^{n-1}$ with $s \geq 2$ and $I(Y) = \{ \beta_1, \ldots, \beta_s \}$.
Put $N := \binom{n+s-2}{s-1} + \binom{n+s-3}{s-2}$.
If $\left| Y \right| \geq 2N$, then for each $i = 1,\dots,s$,
\[ k_i := \prod_{j=1,\dots,s, \atop j \neq i} \frac{1 - \beta_j}{\beta_i - \beta_j} \]
must be an integer with $\left| k_i \right| \leq U(N)$.
\end{theorem}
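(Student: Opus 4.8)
The plan is to reduce the statement to the spectral theory of the relation matrices attached to $Y$, following the linear-algebra method that underlies the Larman--Rogers--Seidel theorem. Write $Y=\{x_1,\dots,x_M\}$ with $M=|Y|$, and for each $i$ let $A_i$ be the symmetric $0$--$1$ matrix with $(A_i)_{pq}=1$ exactly when $x_p\cdot x_q=\beta_i$ (and with zero diagonal). Introduce the Lagrange-type polynomial $F_i(x)=\prod_{j\ne i}\frac{x-\beta_j}{\beta_i-\beta_j}$, which has degree $s-1$ and satisfies $F_i(\beta_j)=\delta_{ij}$ and, crucially, $F_i(1)=k_i$. Evaluating $F_i$ on the Gram data yields $\tilde E_i:=(F_i(x_p\cdot x_q))_{pq}=A_i+k_i I$, a matrix whose off-diagonal entries are $0$ or $1$ and whose diagonal entries are all equal to $k_i$.

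The first key step is a rank bound. Expanding $F_i$ in Gegenbauer polynomials, $F_i(x)=\sum_{k=0}^{s-1}g_k\,Q_{n,k}(x)$, and applying the addition formula exactly as in Section~\ref{sec:LP}, each matrix $(Q_{n,k}(x_p\cdot x_q))_{pq}$ factors as $V_k^{\top}V_k$ with $V_k$ of size $h_k^n\times M$; hence it is positive semidefinite of rank at most $h_k^n$. Therefore $\mathrm{rank}\,\tilde E_i\le\sum_{k=0}^{s-1}h_k^n=N$, where the telescoping of $h_k^n=\binom{n+k-1}{n-1}-\binom{n+k-3}{n-1}$ gives precisely $N=\binom{n+s-2}{s-1}+\binom{n+s-3}{s-2}$. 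Consequently $-k_i$ is an eigenvalue of the integer matrix $A_i$ of multiplicity $m_i\ge M-N$, and since $M\ge 2N$ we get $m_i\ge N\ge M/2$.

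For integrality I would use that $-k_i$, being an eigenvalue of an integer symmetric matrix, is an algebraic integer; a rational algebraic integer is an ordinary integer, so if $k_i\notin\mathbb{Z}$ then $-k_i$ is irrational, and all of its Galois conjugates are also eigenvalues of $A_i$ with the \emph{same} multiplicity $m_i\ge M-N$ (the characteristic polynomial lies in $\mathbb{Z}[x]$, so the minimal polynomial of $-k_i$ divides it to the power $m_i$). Having at least two distinct conjugates forces total multiplicity $\ge 2(M-N)\ge M$, which is possible only when $M=2N$ and $A_i$ has exactly the two eigenvalues $\pm k_i$, each of multiplicity $N$. In that boundary case $\mathrm{tr}\,A_i=0$ and $A_i^2=k_i^2 I$; reading $A_i^2=k_i^2 I$ combinatorially (every vertex has $k_i^2$ neighbours and no two vertices share a neighbour) forces a perfect matching, hence $k_i=1\in\mathbb{Z}$, a contradiction. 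Thus $k_i\in\mathbb{Z}$ in all cases.

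For the bound $|k_i|\le U(N)$, I would exploit the trace identities $\mathrm{tr}\,A_i=0$ and $\mathrm{tr}\,A_i^2=\#\{(p,q):(A_i)_{pq}=1\}$ together with the multiplicity data: separating off $-k_i$ (multiplicity $m_i$) and applying Cauchy--Schwarz to the remaining at most $N$ eigenvalues, whose sum equals $m_i k_i$, should produce a quadratic inequality in $k_i$ of the shape $2k_i(k_i-1)(N-1)\le N^2$, whose largest root is exactly $\tfrac12+\sqrt{\tfrac{N^2}{2N-2}+\tfrac14}$, so that the floor yields $|k_i|\le U(N)$. I expect the main obstacle to be precisely extracting this \emph{sharp} constant: the naive count $\mathrm{tr}\,A_i^2\le M(M-1)$ is too lossy (off by a factor of roughly $2$), so the delicate point is to use the rank-$N$ (equivalently, the high-multiplicity) structure efficiently enough to land on the exact inequality defining $U$. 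The integrality argument, by contrast, is robust, the only care being the boundary case $M=2N$ disposed of above.
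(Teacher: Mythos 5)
You should first be aware that the paper contains no proof of this statement: Theorem~\ref{theo:6.1} is quoted verbatim from Nozaki \cite{N2011}, so the comparison below is with Nozaki's argument rather than with anything in the paper itself. Your first two steps are correct and follow essentially the standard Larman--Rogers--Seidel/Nozaki route: the addition formula gives $\mathrm{rank}(A_i+k_iI)\le\sum_{k=0}^{s-1}h_k^n=N$, so $-k_i$ is an eigenvalue of the integer matrix $A_i$ with multiplicity $m_i\ge M-N$, and the equal multiplicity of Galois conjugates forces $k_i\in\Integer$ once $M\ge 2N$, with the boundary case $M=2N$ correctly killed via $A_i^2=k_i^2I$. (Two small slips: the chain should read $m_i\ge M-N\ge M/2\ge N$, not $m_i\ge N\ge M/2$; and the matching gives $k_i^2=1$, i.e.\ $k_i=\pm1$, not $k_i=1$ --- either way rational, so the contradiction stands.)

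The genuine gap is the bound $|k_i|\le U(N)$, and you have diagnosed it yourself without closing it. The scheme you describe (zero trace, $\mathrm{tr}\,A_i^2\le M(M-1)$, Cauchy--Schwarz over the at most $N$ remaining eigenvalues) yields, at the critical size $M=2N$, only $k_i^2\le 2N-1$, i.e.\ $|k_i|\lesssim\sqrt{2N}$, whereas $U(N)\approx\sqrt{N/2}$. This cannot be repaired by ``using the rank-$N$ structure more efficiently'': the quantities you use --- multiplicity $\ge M-N$, $\mathrm{tr}\,A_i=0$, $\mathrm{tr}\,A_i^2\le M(M-1)$ --- are exactly consistent, as abstract spectral data, with the spectrum $\{(-k)^{(N)},(+k)^{(N)}\}$ for $k=\sqrt{2N-1}$, so no inequality derived from them alone can push $|k_i|$ below $\sqrt{2N-1}$. (Your matching argument excludes this one spectrum for an actual graph, but not spectra with $k$ arbitrarily close to $\sqrt{2N-1}$, so it cannot rescue the constant.) What is missing is structural input beyond the single matrix $A_i$: one must play $A_i$ against its complement $J-I-A_i$, which has $k_i-1$ as an eigenvalue of multiplicity at least $m_i-1$, use the exact identity $\mathrm{tr}\,A_i^2+\mathrm{tr}\,(J-I-A_i)^2=M(M-1)$, and, crucially, invoke the Perron--Frobenius inequality that the largest eigenvalue of each of these two graphs is at least its average degree. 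Feeding these extra facts into the Cauchy--Schwarz computation is what recovers the factor of $2$ and produces the constant $N^2/(2N-2)$ in $U$. The constant has essentially no slack: conference graphs give spherical two-distance sets with $M=2N-1$ points (one fewer than the hypothesis) and $k_1=\tfrac12\bigl(1+\sqrt{2N-1}\bigr)$, just below $U(N)$, so any lossy counting of the kind you outline is doomed to miss the stated bound.
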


If $X=Y \cup (-Y)$ is an antipodal spherical $s$-distance set, then $Y$ is a spherical $(s-1)$-distance set.
Nozaki (2011) proved the following theorem.
(The conditions (lower bounds) of $\left| X \right|$ in Theorem~\ref{theo:6.2} are less restrictive than that in Theorem~\ref{theo:6.1}.)

\begin{theorem}[{\cite[Theorem 5.2]{N2011}}]\label{theo:6.2}
Let $X$ be an antipodal $s$-distance set on $\Sphere^{n-1}$ where $s$ is an odd integer at least $5$. \\
Suppose $I(X)=\{-1,\pm\beta_1, \pm\beta_2, \ldots, \pm\beta_{\frac{s-1}{2}} \}$.
\begin{enumerate}[$(1)$]
\item Let $N=\binom{n+s-4}{s-3}$. If $\left| X \right| \geq 4N$, then for each $i = 1,\dots,(s-1)/2$,
\[ k_i := \prod_{j=1,\dots,\frac{s-1}{2}, \atop j \neq i} \frac{1 - \beta_j^2}{\beta_i^2 - \beta_j^2} \]
must be an integer with $\left| k_i \right| \leq U(N)$.
\item Let $N=\binom{n+s-3}{s-2}$. If $\left| X \right|\geq 4N+2$, then for each $i = 1,\dots,(s-1)/2$,
\[ k_i := \frac{1}{\beta_i} \prod_{j=1,\dots,\frac{s-1}{2}, \atop j \neq i} \frac{1 - \beta_j^2}{\beta_i^2 - \beta_j^2} \]
must be an integer with $\left| k_i \right| \leq \lfloor \sqrt{2N^2/(N+1)} \rfloor$.
\end{enumerate}
\end{theorem}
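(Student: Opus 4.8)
The plan is to mimic, in the antipodal setting, the interpolation-plus-rank argument underlying Theorem~\ref{theo:6.1}, but splitting the spherical harmonics according to parity. The common engine is the principle that if $A$ is a symmetric matrix with integer entries and size $m$, and $\lambda$ is an eigenvalue of multiplicity exceeding $m/2$, then $\lambda$ is a rational algebraic integer, hence $\lambda\in\mathbb{Z}$ (otherwise each Galois conjugate of $\lambda$ would occur with the same multiplicity, overflowing $m$). In both parts I would (i) build from the prescribed $k_i$ a polynomial $P_i$ of controlled degree whose values on $I(X)$ are $0$ or $\pm 1$, (ii) form the Gram-type matrix $C_i=(P_i(\mathbf{x}\cdot\mathbf{y}))$, which by the addition formula lies in the span of the positive semidefinite matrices $G_k=(Q_{n,k}(\mathbf{x}\cdot\mathbf{y}))$ (with $\mathrm{rank}\,G_k=h_k^n$) for the relevant degrees $k$, yielding a rank bound $\mathrm{rank}\,C_i\le N$, (iii) read off that a suitable integer symmetric matrix has $-k_i$ as an eigenvalue of multiplicity at least $m-N$, which under the size hypothesis is a majority, forcing $k_i\in\mathbb{Z}$, and (iv) extract the explicit bound on $|k_i|$ from $\mathrm{tr}=0$ together with the value of the trace of the square.

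For part (1) I would pass to the $\tfrac{|X|}{2}$ antipodal pairs (lines) and use the even Lagrange basis $F_i(u)=\prod_{j\ne i}\frac{u-\beta_j^2}{\beta_i^2-\beta_j^2}$ in the variable $u=x^2$, so that $k_i=F_i(1)$. Setting $C_i=(F_i((\mathbf{x}\cdot\mathbf{y})^2))$ over the lines, its off-diagonal entries record whether $(\mathbf{x}\cdot\mathbf{y})^2=\beta_i^2$, so $C_i=A_i'+k_iI$ for the $0/1$ incidence matrix $A_i'$ of size $m=|X|/2$. Since $F_i((\mathbf{x}\cdot\mathbf{y})^2)$ is even of degree $s-3$, only the even harmonics $k=0,2,\dots,s-3$ contribute, and the telescoping sum $\sum_{k\ \mathrm{even},\,k\le s-3}h_k^n=\binom{n+s-4}{s-3}=N$ supplies the rank bound. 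Then $A_i'$ is a symmetric $0/1$ matrix with zero diagonal having $-k_i$ of multiplicity at least $m-N$; the hypothesis $|X|\ge 4N$ makes this a majority, forcing $k_i\in\mathbb{Z}$, and combining $\mathrm{tr}\,A_i'=0$ with $\mathrm{tr}\,(A_i')^2=$ (twice the edge count) through a Cauchy--Schwarz estimate over the at most $N$ remaining eigenvalues yields the bound $|k_i|\le U(N)$.

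For part (2) the factor $1/\beta_i$ signals the \emph{odd} interpolant $P_i(x)=\frac{x}{\beta_i}\prod_{j\ne i}\frac{x^2-\beta_j^2}{\beta_i^2-\beta_j^2}$, which satisfies $P_i(\pm\beta_\ell)=\pm\delta_{i\ell}$, $P_i(\pm 1)=\pm k_i$, and has degree $s-2$, so only the odd harmonics $k=1,3,\dots,s-2$ occur, giving $\mathrm{rank}\,C_i\le\sum_{k\ \mathrm{odd},\,k\le s-2}h_k^n=\binom{n+s-3}{s-2}=N$. Here $C_i=(P_i(\mathbf{x}\cdot\mathbf{y}))$ is indexed by all of $X$, and the new ingredient is to restrict to the $(-1)$-eigenspace $V_-$ of the antipodal involution $A_{-1}$ (the antisymmetric functions, of dimension $m=|X|/2$). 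On $V_-$ one computes $C_i|_{V_-}=2k_iI+B_i$, where $B_i$ is the restriction of $A_i^+-A_i^-$ and, in the orthogonal pair basis $\{e_{\mathbf{x}}-e_{-\mathbf{x}}\}$, is a symmetric matrix with entries in $\{-1,0,1\}$ and zero diagonal. Since $\mathrm{rank}\,C_i\le N$ one again gets $-2k_i$, hence $-k_i$, as a majority eigenvalue once $|X|\ge 4N+2$ (so $m\ge 2N+1$), forcing $k_i\in\mathbb{Z}$. Because $B_i$ now has entries $\pm 1$ rather than $0/1$, the trace-of-square count measures the number of nonzero entries and the estimate sharpens to $|k_i|\le\lfloor\sqrt{2N^2/(N+1)}\rfloor$.

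The main obstacle, where I would concentrate the care, is twofold. First, I must verify rigorously that the restricted operators ($A_i'$ in part (1), $B_i$ on $V_-$ in part (2)) are genuinely integer symmetric matrices of the claimed size; this rests on the commutation $A_{-1}A_i^+=A_i^+A_{-1}=A_i^-$, which makes $A_i^+$ preserve the eigenspaces of $A_{-1}$ and keeps every entry integral in the pair basis. Second, I must extract the \emph{exact} closed forms $U(N)$ and $\lfloor\sqrt{2N^2/(N+1)}\rfloor$ rather than mere asymptotics, which requires optimizing the Cauchy--Schwarz estimate for the at most $N$ non-principal eigenvalues against the trace identities, with the $\{0,1\}$ versus $\{-1,0,1\}$ entry distinction producing the two different constants; the boundary case of the majority count at $|X|=4N$ also demands the finer argument that an irrational eigenvalue together with a Galois conjugate would overfill the $N$ non-principal slots. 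The parity bookkeeping in the dimension counts is routine but must land precisely on $\binom{n+s-4}{s-3}$ and $\binom{n+s-3}{s-2}$ for the stated hypotheses.
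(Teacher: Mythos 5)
First, a point of reference: the paper contains no proof of Theorem~\ref{theo:6.2} at all --- it is quoted verbatim from Nozaki \cite[Theorem 5.2]{N2011} --- so your proposal can only be judged against Nozaki's argument, not against anything in this text. Your overall architecture does match his: parity-adapted Lagrange interpolation (the even $F_i(x^2)$ of degree $s-3$ in part (1), the odd $P_i$ of degree $s-2$ in part (2)), the rank bound via the addition formula together with the telescoping identities $\sum_{k \,\mathrm{even},\, k\le s-3}h_k^n=\binom{n+s-4}{s-3}$ and $\sum_{k\,\mathrm{odd},\, k\le s-2}h_k^n=\binom{n+s-3}{s-2}$, the passage to antipodal pairs, and the principle that an eigenvalue of an integer symmetric matrix whose multiplicity exceeds half the size must be a rational integer. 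The reduction of part (2) to the antisymmetric subspace is also sound, up to a harmless normalization (in the orthonormal pair basis the restriction is $2k_iI+2B_i$, not $2k_iI+B_i$).

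Two steps, however, are genuine gaps. \emph{The boundary case of part (1).} With $|X|=4N$ you have $m=|X|/2=2N$ and multiplicity at least $m-N=N$; if $-k_i$ were irrational, a Galois conjugate would occur with the same multiplicity, and the two together \emph{exactly fill} the spectrum --- nothing overfills, so your envisioned contradiction evaporates. One needs more: trace zero forces the conjugate to be $+k_i$, so $k_i=\sqrt{d}$ with $d$ a nonsquare positive integer; then the minimal polynomial gives $(A_i')^2=dI$, i.e.\ every vertex of the $\beta_i^2$-graph has degree $d$ and no two distinct vertices share a neighbour, which forces a perfect matching and $d=1$, the desired contradiction. (Part (2) has no such issue, since $|X|\ge 4N+2$ gives a strict majority.) \emph{The explicit constants.} Your global ``trace plus Cauchy--Schwarz over the at most $N$ non-principal eigenvalues'' yields $k_i^2\le N(m-1)/(m-N)$. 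At the part-(2) threshold $m=2N+1$ this is exactly $2N^2/(N+1)$, so your method does prove part (2)'s bound; but at the part-(1) threshold $m=2N$ it gives only $|k_i|\le\sqrt{2N-1}$, roughly twice $U(N)\approx \tfrac12+\sqrt{N/2}$. The sharper constant $U(N)$, equivalently $k_i(k_i-1)\le N^2/(2N-2)$, needs the Larman--Rogers--Seidel per-vertex argument: project the coordinate vectors onto $\ker(A_i'+k_iI)\cap \mathbf{1}^{\perp}$, use that the complement graph $J-I-A_i'$ has eigenvalue $k_i-1$ on that subspace, combine the two per-vertex Cauchy--Schwarz estimates via $\deg_i(x)+\overline{\deg}_i(x)=m-1$, and only then sum over vertices; this gives $(2k_i-1)^2\le (N+1)(m-1)/(m-N-1)$, which at $m=2N$ is precisely the inequality defining $U(N)$. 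Relatedly, your heuristic that the $\pm1$ entries in part (2) ``sharpen'' the bound is backwards: $\sqrt{2N^2/(N+1)}\approx\sqrt{2N}$ is the \emph{weaker} of the two bounds, and the real reason part (1) is sharper is the complement-graph trick available for the line graphs, which the signed matrix $B_i$ of part (2) does not admit.
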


With the above theorem and {\cite[Theorem 5.3]{N2011}}, Nozaki showed that inner products for an antipodal spherical $s$-distance set are in fact rational.

\begin{theorem}[{\cite[Theorem 5.4]{N2011}}]\label{theo:6.3}
Suppose $X$ is an antipodal $s$-distance set in $\Sphere^{n-1}$ with $s \geq 4$.
If $|X| \geq 4\binom{n+s-3}{s-2}+2$, then $\beta$ is rational for
any $\beta \in I(X)$.
\end{theorem}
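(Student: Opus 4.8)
The plan is to reduce the rationality of the whole set $I(X)$ to that of its positive members, and then to extract each positive inner product as a ratio of two integers furnished by Theorem~\ref{theo:6.2}. First I would record the shape of $I(X)$. Since $X=-X$, for any $\mathbf{x}\in X$ we have $\mathbf{x}\cdot(-\mathbf{x})=-1\in I(X)$; whenever $\beta\in I(X)$ with $\beta\neq -1$ the value $-\beta$ also lies in $I(X)$; and every off-diagonal inner product of distinct, non-antipodal unit vectors lies in $(-1,1)$ by Cauchy--Schwarz. Hence
\[ I(X) = \{-1\} \cup \{\pm\beta_1, \ldots, \pm\beta_m\} \quad\text{or}\quad I(X)=\{-1,0\}\cup\{\pm\beta_1,\ldots,\pm\beta_m\}, \]
with the $\beta_i$ distinct and $0<\beta_i<1$. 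As $-1$ and $0$ are already rational, it suffices to prove each $\beta_i$ is rational. I would also check at the outset that the single hypothesis $|X|\geq 4\binom{n+s-3}{s-2}+2$ is strong enough to feed \emph{both} parts of Theorem~\ref{theo:6.2}: by Pascal's rule $\binom{n+s-3}{s-2}\geq\binom{n+s-4}{s-3}$, so the bound exceeds the threshold $4\binom{n+s-4}{s-3}$ of part~$(1)$ as well as the threshold $4\binom{n+s-3}{s-2}+2$ of part~$(2)$.

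The heart of the argument is the odd case $s=2m+1\geq 5$, where $0\notin I(X)$ and Theorem~\ref{theo:6.2} applies verbatim. Writing
\[ k_i^{(1)} = \prod_{j\neq i} \frac{1-\beta_j^2}{\beta_i^2-\beta_j^2}, \qquad k_i^{(2)} = \frac{1}{\beta_i}\prod_{j\neq i}\frac{1-\beta_j^2}{\beta_i^2-\beta_j^2}, \]
parts $(1)$ and $(2)$ assert that both are integers. The decisive observation is simply that $k_i^{(2)}=k_i^{(1)}/\beta_i$, so that
\[ \beta_i = \frac{k_i^{(1)}}{k_i^{(2)}}. \]
Because $0<\beta_j<1$, each factor $1-\beta_j^2$ is nonzero, and the $\beta_i^2$ are distinct, so $k_i^{(1)}\neq 0$ and hence $k_i^{(2)}\neq 0$; thus $\beta_i$ is a ratio of two nonzero integers, i.e. rational. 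This disposes of all odd $s\geq 5$.

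The remaining cases---even $s$, where necessarily $0\in I(X)$, and in particular $s=4$---are where the main obstacle lies, because Theorem~\ref{theo:6.2} is stated only for odd $s\geq 5$ and so does not directly produce the integers $k_i^{(1)},k_i^{(2)}$ here. \textbf{The hard part} is to supply the even-order analogue of the integrality statement; this is precisely the role played by Nozaki's companion result \cite[Theorem 5.3]{N2011}. Granting such an analogue---integrality of the same two Lagrange-type quantities attached to the nonzero values $\beta_1,\ldots,\beta_m$, with the orthogonal value $0$ simply omitted since it is already rational---the identical ratio identity $\beta_i=k_i^{(1)}/k_i^{(2)}$ finishes the even case as well. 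I would therefore organize the write-up so that the ratio trick is isolated as the common final step, and the only case-dependent input is which integrality theorem (Theorem~\ref{theo:6.2} for odd $s$, its even-$s$ counterpart for even $s$) provides the two integers.
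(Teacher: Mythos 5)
Your proposal is correct and takes essentially the route the paper itself indicates: the paper gives no proof of Theorem~\ref{theo:6.3}, quoting it from Nozaki and remarking that it follows from his Theorems 5.2 and 5.3 (stated in this paper as Theorems~\ref{theo:6.2} and~\ref{theo:7.3}), and your odd/even split of $s$ combined with expressing $\beta_i$ as the ratio of the two integers supplied by parts (1) and (2) is precisely that combination, with the threshold bookkeeping via $\binom{n+s-3}{s-2}\geq\binom{n+s-4}{s-3}$ handled correctly. One cosmetic slip: in the even case, Theorem~\ref{theo:7.3}(1) does \emph{not} omit the value $0$ from the product (it contributes a factor $1/\beta_i^2$), so there the ratio reads $\beta_i = k_i^{(2)}/k_i^{(1)}$ rather than $k_i^{(1)}/k_i^{(2)}$; nothing else in your argument changes.
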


A tight spherical design of harmonic index $\{8,4\}$ is regarded as a spherical $4$-distance set $Y$ with $I(Y)=\{\pm\alpha, \pm\beta\}$.
We construct an antipodal spherical $5$-distance set $X^{\prime}=Y \cup (-Y)$.
(Note that $I(X^{\prime})=\{-1,\pm\alpha, \pm\beta\}$.)
By applying Theorem~\ref{theo:6.2} on the set $X^{\prime}$ for $s=5$, we obtain the next lemma.
\begin{lemma}\label{lem:rat}
Suppose $Y$ is a spherical $4$-distance set $\{\pm \alpha, \pm \beta\}$. Let $X^{\prime}=Y \cup (-Y)$.
\begin{enumerate}[$(1)$]
\item If $ \left| Y \right| \geq 2\binom{n+1}{2}$, then the following two numbers are integers:
\[ k_1=\frac{1-\alpha^2}{\beta^2-\alpha^2}, \qquad k_2=\frac{1-\beta^2}{\alpha^2-\beta^2}. \]
\item If $\left| Y \right| \geq 2\binom{n+2}{3}+1$, then the following two numbers are integers:
\[ k_1=\frac{1-\alpha^2}{\beta(\beta^2-\alpha^2)}, \qquad k_2=\frac{1-\beta^2}{\alpha(\alpha^2-\beta^2)}. \]
\end{enumerate}
\end{lemma}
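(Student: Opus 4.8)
The plan is to apply Nozaki's Theorem~\ref{theo:6.2} directly to the antipodal set $X' = Y \cup (-Y)$, so that both statements fall out by specializing $s = 5$ and reading off the two surviving factors. The lemma is essentially a dictionary entry: it translates the hypotheses and conclusions of Theorem~\ref{theo:6.2} into statements about $Y$ itself.

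First I would record the elementary structural facts about $X'$. Since $Y$ is a spherical $4$-distance set with $I(Y) = \{\pm\alpha, \pm\beta\}$ and $-1 \notin I(Y)$ (inner products of distinct points on $\Sphere^{n-1}$ lie in $(-1,1)$), no point of $Y$ is the antipode of another point of $Y$; hence $Y \cap (-Y) = \emptyset$ and $\left| X' \right| = 2\left| Y \right|$. Adjoining the antipodes introduces exactly the inner product $-1$, so $I(X') = \{-1, \pm\alpha, \pm\beta\}$ and $X'$ is a genuine antipodal $5$-distance set. In particular $s = 5$ is odd and at least $5$, so the hypotheses of Theorem~\ref{theo:6.2} are met, with $(s-1)/2 = 2$ and $\{\beta_1, \beta_2\} = \{\alpha, \beta\}$. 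The $4$-distance assumption also guarantees that $\alpha$, $\beta$ and $\alpha^2 - \beta^2$ are all nonzero, so the $k_i$ appearing below are well defined.

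For part~(1) I would take $N = \binom{n+s-4}{s-3} = \binom{n+1}{2}$ from part~(1) of Theorem~\ref{theo:6.2}. The size hypothesis $\left| X' \right| \geq 4N$ becomes $2\left| Y \right| \geq 4\binom{n+1}{2}$, i.e. $\left| Y \right| \geq 2\binom{n+1}{2}$, which is exactly the stated assumption. Substituting $\beta_i \in \{\alpha, \beta\}$ into $k_i = \prod_{j \neq i}(1-\beta_j^2)/(\beta_i^2 - \beta_j^2)$ yields the two integers $(1-\beta^2)/(\alpha^2 - \beta^2)$ and $(1-\alpha^2)/(\beta^2 - \alpha^2)$, which are the claimed $k_2$ and $k_1$. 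For part~(2) I would instead take $N = \binom{n+s-3}{s-2} = \binom{n+2}{3}$ from part~(2); the hypothesis $\left| X' \right| \geq 4N + 2$ now reads $2\left| Y \right| \geq 4\binom{n+2}{3} + 2$, i.e. $\left| Y \right| \geq 2\binom{n+2}{3} + 1$, again matching the statement. Substituting into $k_i = \beta_i^{-1}\prod_{j \neq i}(1-\beta_j^2)/(\beta_i^2 - \beta_j^2)$ gives the integers $(1-\beta^2)/(\alpha(\alpha^2 - \beta^2))$ and $(1-\alpha^2)/(\beta(\beta^2 - \alpha^2))$, as required.

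There is no serious obstacle here: the lemma is a clean specialization, and the only points demanding care are the bookkeeping ones, namely checking $Y \cap (-Y) = \emptyset$ so that $\left| X' \right| = 2\left| Y \right|$ (this is precisely what converts Nozaki's bounds on $\left| X' \right|$ into the stated bounds on $\left| Y \right|$), and keeping the labelling of $\beta_1, \beta_2$ consistent so that the two product formulas line up with the displayed $k_1, k_2$. I would not track the auxiliary upper bounds $\left| k_i \right| \leq U(N)$ and $\left| k_i \right| \leq \lfloor \sqrt{2N^2/(N+1)} \rfloor$ in the statement of the lemma itself, although they come for free from Theorem~\ref{theo:6.2} and are presumably what drive the subsequent diophantine analysis.
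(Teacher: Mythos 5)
Your proposal is correct and is essentially identical to the paper's own (very brief) argument: the paper likewise forms the antipodal set $X'=Y\cup(-Y)$ and invokes Theorem~\ref{theo:6.2} with $s=5$, so that parts (1) and (2) of the lemma are the two parts of that theorem after translating $\left|X'\right|=2\left|Y\right|$. The only difference is that you spell out the bookkeeping (checking $Y\cap(-Y)=\emptyset$, that $I(X')=\{-1,\pm\alpha,\pm\beta\}$, and the labelling of $\beta_1,\beta_2$) which the paper leaves implicit.
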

\begin{theorem}
There exists no tight spherical design of harmonic index $\{8, 4\}$ on $S^{n-1}$ for all $n$.
\end{theorem}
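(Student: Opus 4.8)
The plan is to assume a tight design $Y$ of harmonic index $\{8,4\}$ exists on $\Sphere^{n-1}$ and to exploit that it must be the antipodal $4$-distance set $\{\pm\alpha,\pm\beta\}$ of the prescribed cardinality $|Y|=b_{n,T}=\frac{1}{252}(n+1)(n+2)(n+5)(n+6)$, with $\alpha^2,\beta^2$ the two roots of $u^2-\frac{14}{n+12}u+\frac{21}{(n+8)(n+12)}=0$ found above. Since $|Y|$ is a cardinality, the cheapest necessary condition is $b_{n,T}\in\Integer$, already a strong congruence restriction modulo $252=2^2\cdot 3^2\cdot 7$. I would then record the auxiliary identities $\alpha^2+\beta^2=\frac{14}{n+12}$, $\alpha^2\beta^2=\frac{21}{(n+8)(n+12)}$, and $(\alpha^2-\beta^2)^2=\frac{112(n+5)}{(n+8)(n+12)^2}$, which are exactly what is needed to evaluate the quantities $k_i$ of Lemma~\ref{lem:rat}.

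Because $b_{n,T}$ is quartic in $n$, it eventually dwarfs the thresholds of Lemma~\ref{lem:rat}: one checks $|Y|\ge 2\binom{n+1}{2}$ for $n\ge 9$ and $|Y|\ge 2\binom{n+2}{3}+1$ for $n\ge 73$. For $n\ge 9$, part~(1) applies and a direct computation gives $k_1+k_2=1$ together with $k_1-k_2=-\frac12\sqrt{(n+5)(n+8)/7}$; integrality of the $k_i$ then forces $(n+5)(n+8)=28L^2$ with $L$ an odd integer, equivalently the Pell equation $(2n+13)^2-112L^2=9$. For $n\ge 73$, part~(2) applies as well; since $(1-\alpha^2)/\bigl(\beta(\beta^2-\alpha^2)\bigr)$ is the part-(1) integer divided by $\beta$, its integrality forces $\alpha,\beta\in\Rational$. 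Feeding this back into the explicit formulas turns rationality into the two extra square conditions $n+8+4L=3r^2$ and $n+8-4L=3s^2$ for integers $r,s$ (equivalently, $7(n+5)(n+8)$ and $21(n+8)(n+12)$ are both perfect squares).

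Eliminating $n$ and $L$ from $n+8\pm 4L=3r^2,3s^2$ and $(n+5)(n+8)=28L^2$ collapses the system to the single equation $28\,r^2s^2=(r^2+s^2)\bigl(3(r^2+s^2)+8\bigr)$, which under $W=3(r^2+s^2)+4$, $t=rs$ becomes $W^2-84t^2=16$ subject to the requirement that $\frac{W-4+6t}{3}$ and $\frac{W-4-6t}{3}$ be simultaneously perfect squares. This is an elliptic Diophantine equation, and I would determine all of its integral points (putting it in Weierstrass form and invoking the theory of elliptic curves, aided by a computer-algebra enumeration) and check that none yields an admissible $n\ge 73$. The intermediate range $9\le n\le 72$, where only the weaker Pell condition is available, is finite: intersecting the solutions of $(2n+13)^2-112L^2=9$ with $9\le n\le 72$ leaves only $n=20$, for which $b_{20,T}=300300/252\notin\Integer$, so no design exists there either.

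Finally I would clean up $n\le 8$. Integrality of $b_{n,T}$ eliminates every such $n$ except the degenerate $n=1$ and the genuine case $n=8$ (where $b_{8,T}=65$ and $\alpha,\beta$ are irrational, so neither the Pell argument nor Theorem~\ref{theo:6.3} applies). For $n=8$ the $4$-distance set is a $65$-point $2$-angular line system in $\Real^8$, and I would eliminate it by the semidefinite programming method. The main obstacle is precisely the elliptic step: certifying that all integral points of $W^2-84t^2=16$ satisfying the two square constraints have been found, and that none is admissible — together with making the $n=8$ elimination rigorous rather than merely numerically suggestive.
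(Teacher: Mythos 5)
Your proposal follows the same skeleton as the paper's proof: the Fisher bound $b_{n,T}=\frac{1}{252}(n+1)(n+2)(n+5)(n+6)$, the identification of a tight design as a spherical $4$-distance set with $I(Y)=\{\pm\alpha,\pm\beta\}$, Nozaki's integrality/rationality results (Lemma~\ref{lem:rat}) split into a large-$n$ range, an intermediate range, and a small-$n$ residue, with $n=8$ ($b_{8,T}=65$) finally eliminated by SDP --- exactly as in the paper, where the SDP bound $50.23<65$ finishes that case. Your algebra is also correct as far as it goes: $k_1+k_2=1$ and $k_1-k_2=-\frac12\sqrt{(n+5)(n+8)/7}$ do force $(n+5)(n+8)=28L^2$ with $L$ odd; given this, one indeed gets $\alpha^2=3/(n+8-4L)$ and $\beta^2=3/(n+8+4L)$, so rationality of $\alpha,\beta$ is equivalent to $n+8\pm 4L=3r^2,3s^2$, which collapses to $28r^2s^2=(r^2+s^2)\bigl(3(r^2+s^2)+8\bigr)$; and in the range $9\le n\le 72$ the Pell condition together with $b_{n,T}\in\Integer$ does rule everything out (only $n=20$ satisfies the Pell condition, and $b_{20,T}\notin\Integer$), consistent with the paper's check for $9\le n\le 75$.

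The genuine gap is the decisive large-$n$ step, and the plan you sketch for it would fail as stated. The equation $W^2-84t^2=16$ is a Pell conic: genus $0$, infinitely many integral solutions, no Weierstrass model. So ``determining all of its integral points by putting it in Weierstrass form'' is not meaningful; the entire difficulty sits in the side conditions that $\frac{W-4\pm 6t}{3}$ be perfect squares, i.e., in locating squares along a Pell solution sequence --- a hard Diophantine problem that your proposal leaves unresolved (and flags as the main obstacle). The paper avoids this trap by extracting a different pair of conditions: integrality of $k_2=\frac{2+\sqrt{(n+5)(n+8)/7}}{4}$ gives $(n+5)(n+8)=7(4z-2)^2$, while rationality of $\alpha\beta=\sqrt{21/((n+8)(n+12))}$ gives $(n+8)(n+12)=21p^2$; a gcd/congruence analysis on $(n+5,n+8,n+12)$ then leaves four factorization patterns, three of which die modulo $3$ or $7$, and the surviving one forces $(n+5)(n+8)(n+12)$ to be a perfect square. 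That is a genuine elliptic curve, $y^2=x^3+x^2-12x$ after the shift $x=n+8$ (LMFDB label 168.b2), whose integral points $(-4,0),(0,0),(3,0)$ are known and yield no admissible $n$. If you want to complete your version, the cleanest repair is to route your conditions $(n+5)(n+8)=28L^2$ and $21(n+8)(n+12)=\square$ into this same cubic rather than into the Pell equation.
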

\begin{proof}
If $Y$ is a tight spherical design of harmonic index $\{8,4\}$, then
\[ \left| Y \right|=\frac{(n+1)(n+2)(n+5)(n+6)}{252} \mbox{ with } I(Y)=\{ \pm \alpha, \pm \beta \},\]
where
\[\alpha,\beta=\sqrt{\frac{7(n+8) \pm 2\sqrt{7(n+5)(n+8)}}{(n+8)(n+12)}}.\]
We shall consider three cases: (1) $n\geq 76$, (2) $9\leq n \leq 75$, and (3) $2 \leq n \leq 8$.

\noindent \textbf{Case (1)}:
If $n \geq 76$, then
\[ {\textstyle \frac{(n+1)(n+2)(n+5)(n+6)}{252}  \geq  2\left( \binom{n+2}{3}+\binom{n+1}{2} \right) > 2\binom{n+2}{3}+1 }. \]
By Theorem~\ref{theo:6.1}, $k_1$, $k_2$ are integers.
We have
\[k_2=\frac{1-\beta^2}{\alpha^2-\beta^2}= \frac{2+\sqrt{\frac{(n+5)(n+8)}{7}}}{4}=z \in \mathbb{Z}.\]
Hence $(n+5)(n+8)=7(4z-2)^2$.
By Lemma~\ref{lem:rat} we have
\[\alpha\beta = \sqrt{\frac {21} {(n+8)(n+12)}} \in \Rational.\]
Then $(n+8)(n+12)=21p^2/q^2$ for some coprime integers $p$ and $q$.
Furthermore $21p^2/q^2$ should be integer.
Thus $q^2|21$, i.e., $q=1$.
We have $(n+8)(n+12)=21p^2$ and get the following table for some integers $y_1,y_2,y_3$.
\[ \begin{array}{c|c|c|c}
&n+5 & n+8 & n+12\\ \hline
\text{(i)} & y_1^2 & 7y_2^2 & 3y_3^2\\
\text{(ii)}& 7y_1^2 & y_2^2 & 21y_3^2\\
\text{(iii)} &3y_1^2 & 21y_2^2  & y_3^2\\
\text{(iv)} & 21y_1^2 & 3y_2^2 & 7y_3^2 \\ \hline
\end{array}\]
We know that $\gcd(n+5,n+8)=1$ or $3$.
If $\gcd(n+5,n+8)=1$, then $(n+5)(n+8)=7(4z-2)^2$ implies that $n+5=y_1^2$, $n+8=7y_2^2$ or $n+5=7y_1^2$, $n+8=y^2_2$.
If $\gcd(n+5,n+8)=3$, then $n+5=3y_1^2$, $n+8=21y_2^2$ or $n+5=21y_1^2$, $n+8=3y^2_2$.\\
For case (i), $n+12=3y_3^2$ is obtained from $(n+8)(n+12)=21p^2$. We can similarly get the other three cases in the above table.

Then we can prove the non-existence.
\begin{enumerate}[(i)]
\item $7=3y_3^2-y_1^2$ implies $y_1^2 \equiv 2 \; (\mathrm{mod} 3)$. Impossible.
\item $7=21y_3^2-7y_1^2$ implies $y_1^2 \equiv 2 \; (\mathrm{mod} 3)$. Impossible.
\item $3=21y_2^2-3y_1^2$ implies $y_1^2 \equiv 6 \; (\mathrm{mod} 7)$. Impossible.
\item  We can not get contradiction from basic observation, but this problem can be formulated as the integral solutions of the following equation:
\[y^2=(n+5)(n+8)(n+12).\]
 By linear transformation $x=n+8$, this equation becomes $y^2=x^3+x^2-12x$.
From the database of elliptic curve with LMFDB label 168.b2, we know that $(x,y)=(-4,0),(0,0),(3,0)$ are all integral solutions of $y^2=x^3+x^2-12x$, namely, $y^2=(n+5)(n+8)(n+12)$ has no non-trivial integral solution.
\end{enumerate}
\begin{remark}
(1)
Any elliptic curve over $\Rational$ has a Weierstrass model (or equation) of the form
\begin{equation}\label{equ:dstar} \tag{$\ast\ast$}
 y^2+a_1 x y+a_3 y = x^3+a_2 x^2 +a_4 x+a_6.
 \end{equation}
They are often displayed as a list $[a_1,a_2,a_3,a_4,a_6]$.
More information about the database of elliptic curve is available from:

\verb@http://www.lmfdb.org/EllipticCurve/Q@

\noindent(2)
The integral solutions of some elliptic equations of form \eqref{equ:dstar} can be solved using SAGE \cite{SAGE} with following two commands (the reader should put suitable values for $a_1,a_2,a_3,a_4,a_6$):

\verb@E=EllipticCurve(QQ,[@$a_1,a_2,a_3,a_4,a_6$\verb@])@

\verb@E.integral_points()@
\end{remark}

\noindent \textbf{Case (2)}:
If $9 \leq n \leq 75$, then
\[ \frac{(n+1)(n+2)(n+5)(n+6)}{252}  \geq  2\binom{n+1}{2}. \]
Using the first statement in Lemma~\ref{lem:rat}, we see that both $\frac{1-\alpha^2}{\beta^2-\alpha^2}$ and $\frac{1-\beta^2}{\alpha^2-\beta^2}$ are integers.
It is easy to check that neither of them is integer for $9 \leq n \leq 75$.

\noindent \textbf{Case (3)}:
If $2 \leq n \leq 8$, $b_{8,T}=65$ is the unique case with $b_{n,T} \in \Integer$. We set up the semidefinite programming (SDP) method on the upper bounds for spherical 4-distance sets with the indicated inner product values.
The SDP formula can be obtained from special setting of Bachoc-Vallentin \cite[p. 10--11]{BV2008} or generalization of Barg-Yu \cite[Theorem 3.1]{BY2013} for spherical $2$-distance sets.
We choose the positive semidefinite  matrices $S^n_k$  with size $(9-k) \times (9-k)$ %$k=1,2,3,4$ and $5$
 and linear constraints $\sum_{c_i, c_j \in X} G^n_k ( \langle c_i, c_j \rangle) \geq 0 $ for $k=1$. %$k=1, \cdots , 8$.
($S^n_k$ and $G^n_k$ are the same notation in \cite{BY2013}).
When $n=8$, the SDP upper bound for the spherical 4-distance set is $50.23$ which is strictly less than our LP lower bound $b_{8,T}=65$. We can conclude there exists no tight spherical design of harmonic index $\{8,4\}$.
\end{proof}

\section{Tight spherical designs of harmonic index $\{6,4\}$, $\{6,2\}$, $\{8,6\}$, $\{8,2\}$, as well as $\{10,6,2\}$, $\{12,8,4\}$}
\label{sec:several}
\noindent
We are interested in the cases when $T=\{t_1,t_2,\ldots,t_{\ell}\}$ and $L(x)$ takes the minimum value $-c_{n,T}$ at $\ell$ non-negative points, where \[ L(x) = Q_{n,2e}(x) + f_{t_2}Q_{n,t_2}(x) + \cdots + f_{t_{\ell}}Q_{n,t_{\ell}}(x). \]
In this section, we will prove the non-existence of tight spherical designs of harmonic index $T$ for some $T$ with $\ell=2$ or $3$.

\subsection{Non-existence of tight spherical harmonic designs of index $\{6,4\}$}
Find $f_4$ such that
 \begin{equation}\label{equ:7.3}
Q_{n,6}(x)+f_4Q_{n,4}(x)=ax^2(x^2-\alpha^2)^2-c_{n,T}.
\end{equation}
By comparing the coefficients in \eqref{equ:7.3}, we get the following results:
\[
\begin{array}{l}
 \ds \alpha^2=\frac{15}{2(n+8)}-\frac{15}{(n+8)(n+10)}f_4,\\
\\
\ds \alpha^4=\frac{45}{(n+6)(n+8)}-\frac{180}{(n+4)(n+8)(n+10)}f_4.
\end{array}
\]
Solving $f_4$ and $\alpha^2$ from these two equations gives:
\[ \begin{array}{l}
f_4= \dfrac{n+10}{10(n+4)(n+6)} \left( (n+6)(n-12) \pm 2(n+8)\sqrt{-(n+6)(n-4)} \right) , \\
\\
\alpha^2 = \dfrac{3\left( 2(n+6) \pm \sqrt{-(n+6)(n-4)} \right)}{(n+4)(n+6)}.
\end{array} \]
If $n \geq 5$, then $f_4$ is a complex number.
So we can not find $L(x)=Q_{n,6}(x)+f_4Q_{n,4}(x)$ satisfying our assumption.
It is easy to check that $b_{2,T}=2$, $b_{3,T}=3$ (or 0) and $b_{4,T}=2$.
By Remark~\ref{remark:4.2}, there exists no tight spherical design of harmonic index $\{6,4\}$ when $n=2$ and $\left| Y \right|=2$.
When $n=3$ and $n=4$, observe that the lower bounds for spherical design of harmonic index $6$  are about $3.41$ and $5.29$, respectively, which are strictly larger than $b_{3,T}$ and $b_{4,T}$.
(Note that spherical design of harmonic index $\{6,4\}$ should also satisfy the condition for harmonic index $6$.)
From the discussion above, there exists no tight spherical design of harmonic index $\{6,4\}$ for any $n$.

\subsection{Non-existence of tight spherical harmonic designs of index $\{6,2\}$}
The Gegenbauer polynomial $Q_{n,2}(x)$ is
\[ Q_{n,2}(x)=\frac{(n+2)(nx^2-1)}{2}. \]
Let $Q_{n,6}(x)+f_2Q_{n,2}(x)=ax^2(x^2-\alpha^2)^2-c_{n,T}$.
With similar calculation we have the following results.
\[ \begin{array}{l}
{\displaystyle f_2=\frac{(n-2)(n+4)(n+10)}{32(n+8)}, \quad \alpha=\pm \sqrt{\frac{15}{2(n+8)}}}, \\
\\
{\displaystyle c_{n,T}=\frac{(n+2)(n+6)(n+10)(7n-4)}{192(n+8)}},\\
\\
{\displaystyle b_{n,T}=\frac{n(n+4)(2n+1)^2}{15(7n-4)}}, \\
\\
{\displaystyle \quad \quad = \frac{1}{2401 \times 15}(1372n^3+7644n^2+10199n+7200)+ \frac{1920}{2401(7n-4)}}.
\end{array} \]
\noindent \textbf{Case (1)}:
 If $\left| \frac{1920}{2401(7n-4)}\right|< \frac{1}{2401 \times 15}$, i.e., $n \geq 8817$, then  $b_{n,T}$ is not integer.

\noindent \textbf{Case (2)}: Tight spherical design $Y$ of harmonic index $\{6,2\}$ is regarded as a spherical $3$-distance set with $I(Y) = \{0,\pm \alpha\}$.\\
Lemmens-Seidel (1973) proved the following fact.
\begin{theorem}[{\cite[Theorem 3.4]{LS1973}}]
If there are $|X|$ equiangular lines with angle $\arccos$ $\alpha$ in Euclidean $n$-dimensional space $\mathbb{R}^n$, and if $|X| > 2n$, then $1/ \alpha$ is an odd integer.
\end{theorem}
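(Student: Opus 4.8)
The plan is to recover the arithmetic constraint on $1/\alpha$ from the spectrum of the Gram matrix of unit vectors representing the lines, combined with a reduction modulo $2$. First I would choose one unit vector $v_i$ on each of the $|X|$ lines; then $\langle v_i,v_j\rangle=\pm\alpha$ for $i\neq j$, so the Gram matrix has the form $G=I+\alpha S$, where $S$ is the associated \emph{Seidel matrix}, a symmetric integer matrix with zero diagonal and off-diagonal entries $\pm 1$. Since $G$ is the Gram matrix of $|X|$ vectors in $\Real^n$, it is positive semidefinite of rank $r\le n$; hence $G$ has eigenvalue $0$ with multiplicity $|X|-r\ge |X|-n$, which is to say that $S$ has the eigenvalue $-1/\alpha$ with multiplicity at least $|X|-n$.

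Writing $p:=1/\alpha$, the first step is to show $p\in\Integer$. Because $S$ has integer entries, $-p$ is an algebraic integer, and every Galois conjugate of $-p$ is again an eigenvalue of $S$ occurring with the same multiplicity (the characteristic polynomial lies in $\Integer[x]$ and $S$ is diagonalizable, so each irreducible rational factor contributes its conjugate roots to equal powers). The hypothesis $|X|>2n$ forces the multiplicity $|X|-n$ of $-p$ to exceed $|X|/2$, so $-p$ can admit no further conjugate: two distinct conjugates, each accounting for more than half of the $|X|$ eigenvalues, would overfill the spectrum. Thus the minimal polynomial of $-p$ is linear, $p$ is rational, and being an algebraic integer it lies in $\Integer$.

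The remaining and, I expect, harder step is to prove that $p$ is \emph{odd}, and here the idea is to reduce modulo $2$. Suppose for contradiction that $p$ is even. Consider $M:=pG=S+pI$, an integer matrix of rank $r\le n$; since a vanishing $(r+1)\times(r+1)$ integer minor remains zero mod $2$, its reduction $\bar M$ satisfies $\mathrm{rank}_{\mathbb{F}_2}(\bar M)\le r\le n$. But with $p$ even we have $\bar M\equiv S\equiv J-I\pmod 2$, where $J$ is the all-ones matrix (zero diagonal, all off-diagonal entries $1$). Its rank over $\mathbb{F}_2$ I would read off from $\det(J-I)=(-1)^{|X|-1}(|X|-1)$: when $|X|$ is even this is odd, so $\bar M$ is nonsingular of rank $|X|$; when $|X|$ is odd the kernel of $J-I$ over $\mathbb{F}_2$ is exactly the span of the all-ones vector, so the rank is $|X|-1$. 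In either case, using $|X|\ge 2n+1$, we get $\mathrm{rank}_{\mathbb{F}_2}(\bar M)\ge |X|-1\ge 2n>n\ge r$, contradicting $\mathrm{rank}_{\mathbb{F}_2}(\bar M)\le r$. Hence $p$ is odd.

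The main obstacle is this oddness step: the integrality falls out cleanly from the spectral multiplicity count, but fixing the parity requires the modular-rank computation of $J-I$ together with a case split on the parity of $|X|$. The subtle points to check carefully are that reduction mod $2$ cannot raise rank (so $\mathrm{rank}_{\mathbb{F}_2}(\bar M)\le\mathrm{rank}_{\Real}(M)$), that the multiplicity of the eigenvalue $-p$ is genuinely at least $|X|-n$ and not merely positive, and that the identification $\bar M\equiv J-I$ uses both the evenness of $p$ (killing the diagonal) and the $\pm1$ off-diagonal pattern of $S$.
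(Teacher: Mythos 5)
Your proof is correct. Note first how it sits relative to the paper: the paper does not prove this statement anywhere --- it is quoted from Lemmens--Seidel \cite[Theorem 3.4]{LS1973} --- and the only proof of this kind the paper carries out is that of its Theorem~\ref{theo:7.2}, the weaker integrality claim for $3$-distance sets with $I(Y)=\{0,\pm\alpha\}$. Your integrality half coincides with that argument: the Gram matrix $G=I+\alpha S$ has rank at most $n$, so the integer matrix $S$ has eigenvalue $-1/\alpha$ with multiplicity at least $|X|-n>|X|/2$, and since all Galois conjugates occur with equal multiplicity in the integer characteristic polynomial, $-1/\alpha$ is rational, hence an integer. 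Your parity half is the genuinely new content relative to the paper, and it is sound: reducing mod $2$ cannot raise the rank of an integer matrix (vanishing minors stay zero), so if $p=1/\alpha$ were even then $M=pG=S+pI$ would have $\mathbb{F}_2$-rank at most $n$, while $\bar{M}\equiv J-I \pmod 2$ has $\mathbb{F}_2$-rank at least $|X|-1\geq 2n>n$, a contradiction. This is a rank-theoretic version of the classical argument (due to P.~M.~Neumann, as presented in \cite{LS1973}), which instead reduces the characteristic polynomial: $\det(xI-S)\equiv(x+1)^{|X|-1}(x+|X|-1)\pmod 2$ is never divisible by $x^2$, whereas an even integer eigenvalue of multiplicity at least $2$ would force such divisibility. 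The two routes are essentially equivalent; yours trades Gauss's lemma and characteristic-polynomial reduction for the minor/rank lemma plus a case split on the parity of $|X|$. Finally, your parity step rightly does not transfer to the paper's Theorem~\ref{theo:7.2}: there $S$ may have zero off-diagonal entries, so $\bar{S}\not\equiv J-I$, and oddness genuinely fails in that setting (e.g., $120$ unit vectors from the $E_8$ root system in $\Real^8$ have inner products $\{0,\pm\frac{1}{2}\}$, so $1/\alpha=2$), which is exactly why the paper stops at integrality there.
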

Then we can give a weaker condition for $\alpha$ with $3$-distance set as follows.
\begin{theorem}\label{theo:7.2}
If $Y \subseteq \Sphere^{n-1}$ is a spherical $3$-distance set with $I(Y) = \{0, \pm \alpha\}$ and $\left| Y \right| > 2n$, then $1/ \alpha$ is an integer.
\end{theorem}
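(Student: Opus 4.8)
The plan is to imitate the proof of the Lemmens--Seidel theorem above at the level of Gram matrices; the only structural difference is that the admissible inner product $0$ prevents the parity refinement, which is exactly why the conclusion weakens from ``odd integer'' to ``integer''. Write $Y = \{x_1,\dots,x_N\}$ with $N = |Y|$, and encode $Y$ by its Gram matrix $G = \bigl(\langle x_i,x_j\rangle\bigr)_{1\le i,j\le N}$. Since $I(Y) = \{0,\pm\alpha\}$ and each $x_i$ is a unit vector, I would write
\[ G = I_N + \alpha A, \]
where $A$ is the symmetric $N\times N$ matrix with zero diagonal and off-diagonal entries $A_{ij} = \langle x_i,x_j\rangle/\alpha \in \{-1,0,1\}$. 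In particular $A$ is an integer symmetric matrix, so its characteristic polynomial $\chi_A(t)$ is monic in $\mathbb{Z}[t]$ of degree $N$.

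Next I would extract a high-multiplicity eigenvalue of $A$. Because $G$ is the Gram matrix of vectors lying in $\mathbb{R}^n$, it is positive semidefinite of rank at most $n$, so $0$ is an eigenvalue of $G$ of multiplicity at least $N-n$. Under $G = I_N + \alpha A$ the zero eigenspace of $G$ coincides with the $(-1/\alpha)$-eigenspace of $A$; hence $-1/\alpha$ is an eigenvalue of $A$ of multiplicity $m \ge N - n$. The hypothesis $|Y| > 2n$ gives $N - n > N/2$, so $m > N/2$: the eigenvalue $-1/\alpha$ occupies strictly more than half of the spectrum of $A$.

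Finally I would invoke the integrality argument. The value $-1/\alpha$ is a root of $\chi_A \in \mathbb{Z}[t]$, hence an algebraic integer. Since $\chi_A$ has rational coefficients, all the Galois conjugates of $-1/\alpha$ appear among its roots with the same multiplicity $m$. If $-1/\alpha$ were irrational it would possess at least two distinct conjugates, each an eigenvalue of $A$ of multiplicity $m$, forcing $\deg \chi_A \ge 2m > N$, a contradiction. Therefore $-1/\alpha \in \mathbb{Q}$, and being a rational algebraic integer it lies in $\mathbb{Z}$; consequently $1/\alpha \in \mathbb{Z}$, as claimed (note $\alpha \neq 0$ since $\pm\alpha$ are genuine, nonzero elements of $I(Y)$).

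The only point requiring care is the multiplicity count: one must ensure that $\operatorname{rank} G \le n$ really yields multiplicity $\ge N-n$ for $-1/\alpha$, and that the strict inequality $|Y|>2n$ is precisely what pushes $m$ past $N/2$; everything after that is the standard fact that an eigenvalue of an integer symmetric matrix with multiplicity exceeding half the size must be an integer. It is also worth recording where the loss of the parity conclusion occurs: in the genuinely equiangular situation $A$ has all off-diagonal entries $\pm1$, so reducing modulo $2$ gives $A \equiv J - I_N$ and produces the extra congruence that makes $1/\alpha$ odd; once zero entries are permitted this modular structure disappears, leaving only integrality.
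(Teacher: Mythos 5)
Your proposal is correct and follows essentially the same route as the paper's own proof: the decomposition $G = I + \alpha A$ into an integer symmetric matrix, the multiplicity bound $m \ge |Y| - n > |Y|/2$ from $\operatorname{rank} G \le n$, and the algebraic-conjugate argument forcing $-1/\alpha$ to be a rational algebraic integer. Your closing remark explaining why the parity refinement of Lemmens--Seidel is lost when the inner product $0$ is admitted is a nice addition not present in the paper, but the core argument is identical.
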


\begin{proof}
Let $Y$ be a set of unit vectors whose mutual inner products is $\{0, \pm \alpha\}$, and let $G$ be the Gram matrix of such vectors.
Then,
\begin{equation*}
G=\left(\begin{array}{cccc}
1 & & x \\
& \ddots &\\
x & & 1
\end{array}\right), \quad
A=\frac{1}{\alpha}(G-I)=
\left(\begin{array}{cccc}
0 & & x^{\prime} \\
& \ddots &\\
x^{\prime} & & 0
\end{array}\right).
\end{equation*}
where $x \in \{0, \pm \alpha\}$ and $x^{\prime} \in \{0, \pm 1 \}$.

$G$ is a symmetric and positive semi-definite matrix of order $|Y|$. It has the smallest eigenvalue $0$ of multiplicity $m \geq |Y|-n$.
Therefore, $A$ has the smallest eigenvalue $-1/ \alpha$ of multiplicity $m \geq |Y|-n$.
Moreover, $-1/ \alpha$ is an algebraic integer since $A$ is an integer matrix, and every algebraic conjugate of $-1/ \alpha$ is also an eigenvalue of $A$ with multiplicity $m$.
If $|Y| > 2n$, then $m > \frac{|Y|}{2}$.
Note $A$ can not have more than one eigenvalue of multiplicity $m$ because $A$ is a $|Y| \times |Y|$ matrix.
Therefore $-1/ \alpha$ is rational, since it is also an algebraic integer, hence $-1/ \alpha$ is an integer.
\end{proof}
If $5 \leq n \leq 8816$, then $\left| Y \right| >2n$. By Theorem~\ref{theo:7.2}, we know that $1/ \alpha \in \Integer$. And it is easy to check that $\frac{1}{\alpha}=\sqrt{\frac{2(n+8)}{15}} \in \Integer$ and $b_{n,T} \in \Integer$ can not hold simultaneously for $5 \leq n \leq 8816$.

\noindent \textbf{Case (3)}:
 If $2 \leq n \leq 4$, then $b_{2,T}=2$ is the unique integral case. By Remark~\ref{remark:4.2}, $Y=\{(1,0), (0,1)\}$ is a tight spherical design of harmonic index $\{6,2\}$ in $\Sphere^1 \subseteq \Real^2$.

\subsection{Non-existence of tight spherical harmonic designs of index $\{8,6\}$}
 Find $f_6$ such that
\begin{equation}\label{equ:7.4}
Q_{n,8}(x)+f_6Q_{n,6}(x)=a(x^2-\alpha^2)^2(x^2-\beta^2)^2-c_{n,T}.
\end{equation}

By comparing the coefficients in \eqref{equ:7.4}, we get the following results:

\[ \begin{array}{l}
\ds \alpha^2+\beta^2=\frac{14}{n+12}-\frac{28}{(n+12)(n+14)}f_6, \\
 \\
\ds (\alpha^2+\beta^2)^2+2\alpha^2\beta^2=\frac{210}{(n+10)(n+12)}-\frac{840}{(n+8)(n+12)(n+14)}f_6, \\
\ds \alpha^2\beta^2(\alpha^2+\beta^2)=\frac{210}{(n+8)(n+10)(n+12)}-\frac{1260}{(n+6)(n+8)(n+12)(n+14)}f_6.
\end{array} \]

Set $Z_1=\alpha^2+\beta^2$, $Z_2=\alpha^2 \beta^2$.
Then we have the following relations:
\[
{\displaystyle f_6=\frac{n+14}{2}\left(1-\frac{n+12}{14}Z_1 \right)}, \quad
{\displaystyle Z_2=\frac{15 \big( 3(n+10)Z_1-28 \big)}{(n+6)(n+8)(n+10)Z_1}}.
\]
We can obtain that
\[Z_1=\alpha^2+\beta^2=\frac{2(10g)^{\frac{1}{3}}}{(n+6)(n+8)(n+10)}+\frac{40(n+3)}{(n+8)(10g)^{\frac{1}{3}}}+\frac{10}{n+8}.\]
where
\[ {\textstyle g=\left(-(n-2)(n+3)+(n+3)(n+8)\sqrt{\frac{n(n-4)}{(n+6)(n+10)}}\right)(n+6)^2(n+10)^2 }.\]
In this case, the lower bound for $\left| Y \right|$ is
\[ b_{n,T}=\frac{1}{252 \times 9.427094401\cdots} \times n^4(1+o(1)).\]

\noindent \textbf{Case (1)}:
 If $n \geq 759$, then $\left| Y \right| \geq 2 \binom{n+2}{3}+1$. Using Theorem~\ref{theo:6.3}, we know that $\alpha, \beta \in \Rational$.
So one necessary condition is $\sqrt{\frac{n(n-4)}{(n+6)(n+10)}} \in \Rational$.
Equivalently, $n(n-4)(n+6)(n+10)=(n^2+6n-20)^2-400=u^2$ for some integer $u$, i.e.,
\[(n^2+6n-20+u)(n^2+6n-20-u)=400.\]
Moreover, these two factors of $400$ have the same parity, since the difference is $2u$.
Then they can be one of the following cases, $2 \times 200$, $4 \times 100$, $8 \times 50$, $10 \times 40$ and $20 \times 20$.
But this is satisfied only when $n=4,6$.

\noindent \textbf{Case (2)}:
If $2 \leq n \leq 758$, it is easy to check that $b_{4,T}=2$ is the unique case when $b_{n,T} \in \Integer$. However, the lower bound for spherical design of harmonic index $6$ is about $5.29 $, which is strictly larger than $b_{4,T}$.

\subsection{Non-existence of tight spherical harmonic designs of index $\{8,2\}$}
We want to determine $f_2$ such that
\[Q_{n,8}(x)+f_2Q_{n,2}(x)=a(x^2-\alpha^2)^2(x^2-\beta^2)^2-c_{n,T}.\]
From calculation we have the following results:
\[f_2=\frac{(n-2)(n+4)(n+5)(n+6)(n+14)}{90(n+12)^2}.\]
\[\alpha^2,\beta^2=\frac{7}{n+12} \pm \frac{\sqrt{42(n+5)(n+10)}}{(n+10)(n+12)}.\]
\[c_{n,T}=\frac{(n+2)(n+4)(n+5)(n+8)(n+14)(n^3+27n^2+356n-240)}{240(n+10)(n+12)^3}.\]
Then we have
\[ \begin{array}{rcl}
b_{n,T} & = & {\displaystyle \frac{n(n+6)(n+5)(n^2+15n+8)^2}{168(n^3+27n^2+356n-240)}} \\
& & \\
&=& {\displaystyle \frac{1}{168}(n^4+14n^3-133n^2+2638n-10584)} \\
& & \\
& & \qquad\qquad\qquad {\displaystyle +\frac{-4032n^2+26208n-15120}{n^3+27n^2+356n-240}}.
\end{array} \]
Let $p(n)=-4032n^2+26208n-15120$ and $q(n)=n^3+27n^2+356n-240$.
Then $|\frac{p(n)}{q(n)}| <\frac{1}{168}$ gives a condition for $b_{n,T}$ not being integer.
%$\frac{p(n)}{q(n)} <\frac{1}{168}$ if $n \leq 6$ and $\frac{p(n)}{q(n)} > -\frac{1}{168}$ if $1 \leq n \leq 5$ or $n \geq 677343$.
This implies that $b_{n,T}$ can not be integer if $n \geq 677343$.
We can check the remaining cases where $b_{n,T}$ is integer, and obtain
$b_{2,T}=2$, $b_{4,T}=9$, $b_{9,T}=96$. (The case $b_{2,T}=2$ is eliminated by Remark~\ref{remark:4.2}.)\\
If $n=4$, the SDP upper bound for $4$-distance set is $8.9981$.
If $n=9$, then $b_{n,T} \geq  2\binom{n+1}{2}$.  But neither $\frac{1-\alpha^2}{\beta^2-\alpha^2}$ nor $\frac{1-\beta^2}{\alpha^2-\beta^2}$ is an integer. By Lemma~\ref{lem:rat}, the case when $n=9$ is also impossible.

\subsection{Non-existence of tight spherical designs of harmonic index $\{10,6,2\}$.} \label{sec:10-6-2}

Consider the case when $Q_{n,10}(x)+f_6Q_{n,6}(x)+f_2Q_{n,2}(x)$ takes minimum value $-c_{n,T}$ at three non-negative points $\{0,\alpha,\beta\}$, i.e.,
\[ L(x) = Q_{n,10}(x)+f_6Q_{n,6}(x)+f_2Q_{n,2}(x)  =  ax^2(x^2-\alpha^2)^2(x^2-\beta^2)^2-c_{n,T}.\]
We can solve $f_6$ and $f_2$ as follows.
\[ \begin{array}{l}
{\displaystyle f_6=\frac{(n-2)(n+8)(n+18)(13n+28)}{1344(n-8)(n+16)}}, \\
\\
{\textstyle f_2=\frac{(n-2)(n+4)(n+8)(n+14)(n+18)(37n^3-742n^2+1792n+20256)}{129024(n-8)^2(n+12)(n+16)}}.
\end{array} \]
Then we can get $\alpha^2,\beta^2$ and the lower bound $b_{n,T}$.
\[ \begin{array}{l}
{\displaystyle \alpha^2,\beta^2=\frac{45(n-8)(n+12) \pm \sqrt{15(n-8)(n+12)(43n^2-244n-1952)}}{4(n-8)(n+12)(n+16)}},\\
\\
{\displaystyle b_{n,T}=\frac{ n (n+4)(n+8) (4 n^3-10 n^2-143 n-84)^2}{45(781n^4-9548n^3+10128n^2+160960n-108032)}}.
\end{array} \]
The following theorem is very useful to consider the existence of spherical design of harmonic index $\{10,6,2\}$.
\begin{theorem}[{\cite[Theorem 5.3]{N2011}}]\label{theo:7.3}
Let $X$ be an antipodal $s$-distance set on $\Sphere^{n-1}$ where $s$ is an even integer at least $4$. Let $I(X)=\{-1, \beta_1=0, \pm\beta_2, \ldots, \pm\beta_{\frac{s}{2}} \}$.
\begin{enumerate}[(1)]
\item
 Let $N=\binom{n+s-3}{s-2}$. If $\left| X \right| \geq 4N$, then for each $i = 1,\dots,s/2$,
\[ k_i := \prod_{j=1,\dots,\frac{s}{2}, j  \neq i} \frac{1 - \beta_j^2}{\beta_i^2 - \beta_j^2} \]
must be an integer with $\left| k_i \right| \leq U(N)$.\\
\item
 Let $N=\binom{n+s-4}{s-3}$. If $\left| X \right|\geq 4N+2$, then for each $i = 1,\dots,s/2$,
\[ k_i := \frac{1}{\beta_i}\prod_{j=2,\dots,\frac{s}{2}, j  \neq i} \frac{1 - \beta_j^2}{\beta_i^2 - \beta_j^2} \]
must be an integer with $\left| k_i \right| \leq \lfloor \sqrt{2N^2/(N+1)} \rfloor $.
\end{enumerate}
\end{theorem}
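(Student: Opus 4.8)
The plan is to adapt the Larman--Rogers--Seidel eigenvalue argument --- the same mechanism used in the proof of Theorem~\ref{theo:7.2} --- to the antipodal setting, the new feature being that one applies a single interpolating polynomial \emph{entrywise} to a Gram matrix and then reads off $k_i$ as an eigenvalue of an integer matrix. Since $X$ is antipodal, write $X = Y \cup (-Y)$ where $Y$ is a set of $|X|/2$ unit vectors no two of which are antipodal; all the matrices below are indexed by $Y$, which accounts for the factors of $2$ and $4$ in the hypotheses on $|X|$. For part~(1) introduce the Lagrange polynomial in the squared variable
\[ G_i(u)=\prod_{j=1,\, j\neq i}^{s/2}\frac{u-\beta_j^2}{\beta_i^2-\beta_j^2},\qquad G_i(\beta_p^2)=\delta_{ip},\qquad G_i(1)=k_i, \]
and form the symmetric matrix $\Phi_i$ on $Y$ with entries $(\Phi_i)_{x,y}=G_i\big((x\cdot y)^2\big)$. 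For distinct $x,y\in Y$ one has $(x\cdot y)^2\in\{\beta_1^2,\dots,\beta_{s/2}^2\}$, so off-diagonal entries are $0$ or $1$, while every diagonal entry equals $G_i(1)=k_i$. Thus $\Phi_i=\tilde E_i+k_i I$ with $\tilde E_i$ a symmetric $0/1$ matrix of zero diagonal. For part~(2) (where $\beta_i\neq0$, so $i\geq2$) I would instead use the \emph{odd} polynomial $H_i(t)=\frac{t}{\beta_i}\prod_{j=2,\,j\neq i}^{s/2}\frac{t^2-\beta_j^2}{\beta_i^2-\beta_j^2}$, whose entrywise application to the Gram matrix of $Y$ gives $\Psi_i=\hat E_i+k_iI$ with $\hat E_i$ a symmetric $\{0,\pm1\}$ matrix of zero diagonal (here $H_i(x\cdot y)\in\{0,\pm1\}$ because the inner products occur in $\pm$ pairs).

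The crux is a rank bound. Expanding $G_i(t^2)=\sum_k g_k\,Q_{n,k}(t)$ in Gegenbauer polynomials, only \emph{even} indices $k\in\{0,2,\dots,s-2\}$ occur, since $G_i(t^2)$ is an even polynomial of degree $s-2$ and $Q_{n,k}$ has parity $(-1)^k$. By the addition formula each matrix $\big[Q_{n,k}(x\cdot y)\big]_{x,y\in Y}$ is positive semidefinite of rank at most $h_k^n$, whence
\[ \operatorname{rank}\Phi_i\;\leq\;\sum_{k\in\{0,2,\dots,s-2\}}h_k^n\;=\;\binom{n+s-3}{s-2}\;=\;N. \]
Consequently $\Phi_i$ has eigenvalue $0$ of multiplicity at least $|Y|-N$, so the integer matrix $\tilde E_i=\Phi_i-k_iI$ has $-k_i$ as an eigenvalue of multiplicity $\mu\geq|Y|-N$. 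As an eigenvalue of an integer symmetric matrix, $-k_i$ is an algebraic integer, and each Galois conjugate of $-k_i$ is again an eigenvalue of $\tilde E_i$ of the same multiplicity $\mu$. Were $-k_i$ irrational, a conjugate $\neq-k_i$ would also appear with multiplicity $\mu$, forcing $2(|Y|-N)\leq2\mu\leq|Y|$; the hypothesis $|X|\geq4N$ (so $|Y|\geq2N$) is calibrated precisely to violate this, so $-k_i$ is rational and hence a rational integer. Part~(2) is identical with odd indices: $\operatorname{rank}\Psi_i\leq\sum_{k\in\{1,3,\dots,s-3\}}h_k^n=\binom{n+s-4}{s-3}=N$, and the stronger hypothesis $|X|\geq4N+2$ makes $\mu$ strictly exceed $|Y|/2$, giving $k_i\in\Integer$.

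It remains to bound $|k_i|$, and this is where I expect the real work to lie. The eigenvalue $-k_i$ of the combinatorial matrix $\tilde E_i$ (resp.\ $\hat E_i$) has multiplicity $\mu\geq|Y|-N$, while the at most $N$ remaining eigenvalues $\theta_1,\dots,\theta_{|Y|-\mu}$ are constrained by the two trace identities $\operatorname{tr}\tilde E_i=-\mu k_i+\sum_j\theta_j=0$ and $\operatorname{tr}\tilde E_i^{\,2}=\mu k_i^2+\sum_j\theta_j^2$. Combining $\big(\sum_j\theta_j\big)^2\leq(|Y|-\mu)\sum_j\theta_j^2$ (Cauchy--Schwarz) with these identities yields a quadratic inequality in $k_i$ whose solution is exactly $|k_i|\leq U(N)$; in part~(2) the $\{0,\pm1\}$ (rather than $0/1$) entries enlarge $\operatorname{tr}\hat E_i^{\,2}$ and alter the constant, producing the bound $|k_i|\leq\lfloor\sqrt{2N^2/(N+1)}\rfloor$. \textbf{The main obstacle} is precisely the bookkeeping that makes these trace estimates land on the stated closed forms and that pins down the exact thresholds $4N$ and $4N+2$ (including the boundary case $|Y|=2N$ in part~(1), where one must check directly that two conjugate eigenspaces cannot fill all of $\Real^{|Y|}$); the conceptual core --- entrywise interpolation, the Gegenbauer/parity rank bound, and the ``no irrational eigenvalue of multiplicity exceeding half'' principle --- is otherwise a direct antipodal analogue of Theorem~\ref{theo:7.2}.
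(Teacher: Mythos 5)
The paper itself does not prove this statement: it is quoted verbatim from Nozaki \cite[Theorem~5.3]{N2011}, so your proposal can only be measured against Nozaki's argument, whose skeleton you have in fact reconstructed correctly. The reduction to $Y$ with $|Y|=|X|/2$, the entrywise Lagrange interpolation in $u=t^2$ (even case) and the odd interpolant $H_i$ (odd case), the parity-split Gegenbauer rank counts $\sum_{k\in\{0,2,\dots,s-2\}}h_k^n=\binom{n+s-3}{s-2}$ and $\sum_{k\in\{1,3,\dots,s-3\}}h_k^n=\binom{n+s-4}{s-3}$ (which is exactly why the two values of $N$ in the statement are what they are), and the Galois-conjugate multiplicity argument for integrality are all sound; your tacit restriction to $i\geq 2$ in part (2) also correctly repairs what is evidently a typo in the quoted statement. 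The boundary case $|Y|=2N$ in part (1), which you flagged, is genuinely needed but patchable: if $\mu=|Y|/2$ and a conjugate eigenvalue fills the remaining half, then $\operatorname{tr}\tilde E_i=0$ forces the conjugate to be $+k_i$, whence $\tilde E_i^2=k_i^2I$; a $0/1$ graph that is $k_i^2$-regular with no two vertices sharing a neighbour forces $k_i^2\leq 1$, contradicting irrationality.

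The genuine gap is in your derivation of the bound $U(N)$ in part (1), and it is more than bookkeeping. Your single-matrix scheme gives, via $\operatorname{tr}\tilde E_i^2\leq |Y|(|Y|-1)$ and Cauchy--Schwarz, at best $k_i^2\leq N(|Y|-1)/(|Y|-N)\leq 2N-1$ at $|Y|=2N$; but $|k_i|\leq U(N)$ is equivalent to $|k_i|(|k_i|-1)\leq N^2/(2(N-1))$, i.e.\ $k_i^2\approx N/2$, which is stronger by roughly a factor of $4$ and is provably out of reach of that estimate. The missing idea is to exploit the $0/1$ structure through the complement: $J-I-\tilde E_i$ has eigenvalue $k_i-1$ with multiplicity at least $\mu-1$ (intersect the eigenspace with $\mathbf{1}^{\perp}$), and applying Cauchy--Schwarz to \emph{both} matrices and adding, using $\operatorname{tr}\tilde E_i^2+\operatorname{tr}(J-I-\tilde E_i)^2=|Y|(|Y|-1)$, yields
\[ |Y|-1 \;\geq\; \frac{\mu\,k_i^2}{|Y|-\mu}+\frac{(\mu-1)(k_i-1)^2}{|Y|-\mu+1}, \]
which with $\mu\geq|Y|-N$ and $|Y|\geq 2N$ closes to $|k_i|(|k_i|-1)\leq N^2/(2(N-1))$. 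Ironically, your route \emph{is} complete for part (2): there $k_i^2\leq N(|Y|-1)/(|Y|-N)\leq 2N^2/(N+1)$ at $|Y|=2N+1$ is exactly the stated bound, with the floor supplied by integrality. But your stated explanation of the difference between the two constants is backwards: the trace bound $\leq|Y|(|Y|-1)$ is identical in both parts; what changes is that the $\{0,\pm1\}$ entries in part (2) destroy the complement trick, which is why part (2) carries the weaker $\sqrt{2N}$-type constant while part (1) achieves the $\sqrt{N/2}$-type constant $U(N)$.
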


Similarly, tight spherical design of harmonic index $\{10,6,2\}$ is regarded as a spherical $5$-distance set $Y$ with $I(Y)=\{0, \pm\alpha, \pm\beta\}$. We construct an antipodal $6$-distance set $X^{\prime}=Y \cup (-Y)$ with $I(X^{\prime})=\{-1,0, \pm\alpha, \pm\beta\}$.\\

\noindent \textbf{Case (1)}:
 If $n \geq 170$, then $\left| Y \right| \geq 2\binom{n+6-3}{6-2}+1$.
Theorem~\ref{theo:6.3} implies that $\alpha,\beta \in \Rational$. Namely, $\alpha\beta=\sqrt{\frac{(n-8)(n+12)(n+16)}{15(23n-172)}} \in \Rational$. There exists an integer $u$ such that
$$15(n-8)(n+12)(n+16)(23n-172)=u^2.$$
Assume that $n-8 = Ay_1^2$, $n+12 = By_2^2$, $n+16 = Cy_3^2$, $23n-172 = Dy_4^2$. Let $p(n_1,n_2)$ be the prime divisor of $\gcd(n_1,n_2)$. Since $\gcd(n-8,n+12)$ can be one of $1,2,4,5,10,20$, then $p(n-8,n+12)=2$ or $5$. And with similar argument, we obtain the following result.
\[ \begin{array}{ll}
p(n-8,n+12)=2\ \text{or} \ 5, & p(n-8,n+16)=2 \ \text{or} \ 3, \\
\\
  p(n-8,23n-172)=2 \ \text{or} \ 3, & p(n+12,n+16)=2,  \\
\\
   p(n+12,23n-172)=2 \ \text{or} \ 7, & p(n+16,23n-172)=2, \ 3, \text{or} \ 5.
\end{array} \]
Then we get $16$ elliptic equations
\[ (n-8)(n+12)(n+16)=E y^2 \]
with $E \mid 2 \times 3 \times 5 \times 7$. We can obtain the integral solutions of these equations. \\
Multiply both sides of the equation $(n-8)(n+12)(n+16)=E y^2$ by $E^3$ and make linear transformation $M=E^2y$ and $N=E(n+12)$. Then $(n-8)(n+12)(n+16)=E y^2$ becomes $N(N-20E)(N+4E)=M^2$ with coefficients of $M^2$ and $N^3$ being $1$.\\
If $E=2$, with the linear transformation $N=2n+24$ and $M=4y$, the equation becomes $N(N-40)(N+8)=M^2$. It has only three integral solutions $(N,M)=(0,0)$, $(40,0)$ and $(-8,0)$.  \\
In the table below, we list all the cases when the equation $(n-8)(n+12)(n+16)=E y^2$ has non-trivial integral solutions.

\[ \begin{array}{c|l}
E & \text{solution}(n,y) \\ \hline%&
5 &  (33,105)  \\  \hline
7  & (16,32),(68,240) \\ \hline
6  & (20,48), (48,160)  \\ \hline
10  &  (24,48),(38,90)  \\ \hline
15  &  (308,1440)  \\ \hline
21   & (9,5),(488,2400)  \\ \hline
42   &  (12,8),(128,240)  \\ \hline
 210   &  (44,24) \\ \hline
\end{array} \]
There are two values for $n \geq 170$, i.e., $n=308,488$. But in these two cases, $\alpha \notin \Rational$.

\noindent \textbf{Case (2)}:
If $19 \leq n \leq 169$, then the second statement in Theorem \ref{theo:7.3} implies that $\frac{1-\beta^2}{\alpha(\alpha^2-\beta^2)}$ must be integer. And it is easy to check this can not be satisfied.

\noindent \textbf{Case (3)}:
 If $2 \leq n \leq 18$, then $b_{8,T}=8$ is the only integral case. Let
\[ m_1=\alpha^2+\beta^2, ~ m_2=(\alpha^2+\beta^2)^2+2\alpha^2\beta^2, ~ m_3=\alpha^2\beta^2(\alpha^2+\beta^2). \]
Then $m_1(m_2-m_1^2)-2m_3=0$. However, comparing the coefficients in $L(x)$ and $G(x)$, we obtain
\[
{\textstyle
\frac{1}{2}m_1(m_2-m_1^2)-m_3=\frac{1}{(n+12)(n+14)(n+16)^2} \left( \frac{18900(n-8)}{(n+8)(n+18)}f_6-\frac{225(13n+28)(n-2)}{16(n+16)} \right).}
\]
If $n=8$, then the RHS of the above equation is $-\frac{15}{8192} \neq 0$.
This is a contradiction. And it implies that, when $n=8$, we can not find a function $L(x)$ satisfying our assumption.

\subsection{Tight spherical designs of harmonic index $\{12,8,4\}$.}
Consider the case when $Q_{n,12}(x)+f_8Q_{n,8}(x)+f_4Q_{n,4}(x)$ takes minimum value $-c_{n,T}$ at three positive points $\{\alpha,\beta,\gamma\}$, i.e.,
\[
\begin{array}{l}
L(x)=Q_{n,12}(x)+f_8Q_{n,8}(x)+f_4Q_{n,4}(x)\\
\\
\qquad =a(x^2-\alpha^2)^2(x^2-\beta^2)^2(x^2-\gamma^2)^2-c_{n,T}=G(x).
\end{array}
\]
Let $Z_1=\alpha^2+\beta^2+\gamma^2$, $Z_2=\alpha^2\beta^2+\alpha^2\gamma^2+\beta^2\gamma^2$ and $Z_3=\alpha^2\beta^2\gamma^2$. Comparing the coefficients in $L(x)$ and $G(x)$, we can obtain the following results.
$$m_1=Z_1, \ m_2=Z_1^2+2Z_2,\ m_3=Z_1Z_2+Z_3,\ m_4=Z_2^2+2Z_1Z_3,\ m_5=Z_2Z_3.$$
where $-2am_1$, $am_2$, $-2am_3$, $am_4$ and $-2am_5$ are the coefficients of $x^{10}$, $x^8$, $x^6$, $x^4$, $x^2$ in polynomial $G(x)$, respectively.
 Note that $m_i$ is linear combination of $f_8$ and $f_4$. Substituting $Z_1=\frac{m_2-m_1^2}{2}$ and $Z_3=m_3-Z_1Z_2=m_3-m_1(\frac{m_2-m_1^2}{2})$ into $m_4$ and $m_5$, we can solve $f_8$ and $f_4$ as follows.
\[ \begin{array}{l}
{\displaystyle f_8=\frac{(n+8)(n+9)(n+22)}{180(n+20)}},\\
\\
{\displaystyle f_4=\frac{(n+4)(n+5)(n+8)(n+9)(n+18)(n+22)}{7200(n+16)(n+20)}}.
\end{array} \]
Immediately we have
\[ \begin{array}{l}
{\displaystyle Z_1=\alpha^2+\beta^2+\gamma^2=\frac{33}{n+20}},\\
\\
{\displaystyle Z_2=\alpha^2\beta^2+\beta^2\gamma^2+\alpha^2\gamma^2=\frac{231}{(n+16)(n+20)}},\\
\\
{\displaystyle Z_3=\alpha^2\beta^2\gamma^2=\frac{231}{(n+12)(n+16)(n+20)}},\\
\\
{\displaystyle b_{n,T}=\frac{1}{27720}(n+1)(n+2)(n+5)(n+6)(n+9)(n+10)}.
\end{array} \]

\noindent \textbf{Case (1)}:
If $n \geq 439$, then $b_{n,T} \geq 2\binom{n+7-3}{7-2}+1$. Theorem \ref{theo:6.3} implies that
\[ \alpha \beta \gamma=\sqrt{\frac{231}{(n+12)(n+16)(n+20)}} \in \Rational .\]
Equivalently, we have $(n+12)(n+16)(n+20)=231y^2$ for some integer $y$.
With linear transformation $M=231^2y$ and $N=231(n+16)$, we have $N(N^2-462^2)=M^2$.  It has integral solutions
$(N,M)=(-528,17424)$, $(-252,14112)$, $(1617,53361)$, $(3388,189728)$. However, this gives no positive integral solution for $n=N/231-16$.

 \noindent \textbf{Case (2)}:
 If $34 \leq n \leq 438$, then $b_{n,T} \geq 2\binom{n+7-4}{7-3}+1$. By Theorem~\ref{theo:7.3}, $\frac{(1-\beta^2)(1-\gamma^2)}{\alpha(\alpha^2-\beta^2)(\alpha^2-\gamma^2)}$ must be integer. And it is easy to check this can not be satisfied.

 \noindent \textbf{Case (3)}:
 If $n \leq 33$, then $b_{5,T}=35$, $b_{6,T}=64$, $b_{9,T}=285$, $b_{13,T}=1311$, $b_{16,T}=3315$, $b_{20,T}=9425$, $b_{23,T}=18560$ are the integral cases.
  But LP upper bounds for spherical $6$-distance set corresponding to $n=5,6,20,23$ are $30.2656$, $59.8173$, $9405.11$, $17926.1$, respectively.
  When $n=9,13,16$, we set up the SDP method on the upper bounds for spherical $6$-distance sets.
  However, the upper bounds coincide with our lower bounds $b_{n,T}$.
  We conclude that there exists no tight spherical design of harmonic index $\{12,8,4\}$ if $n \neq 9,13,16$.

\section{Concluding remarks}
\label{sec:remark}
\noindent
In this paper we considered mainly spherical designs of harmonic index $T=\{t\},$ or $T=\{t_1,t_2\}$.
For some $T=\{t_1,t_2,\ldots ,t_{\ell}\}$ (with $t_1=2e>t_2> \cdots > t_{\ell}$ and all $t_i$ are even), it seems that the general interesting case is where $L(x)=Q_{n,2e}(x)+f_{t_2}Q_{n,t_2}(x)+f_{t_3}Q_{n,t_3}(x)+\cdots +f_{t_{\ell}}Q_{n,t_{\ell}}(x)$ and the minimum value of $L(x)$ is at $\ell$ non-negative points $\alpha_1, \alpha_2,  \ldots , \alpha_{\ell}$.
Thus, further studies along this line would be interesting.

As we have shown in Section~\ref{sec:asymptotic} (as well as in previous sections), it seems remarkable that a spherical design of harmonic index $t=2e$ has a Fisher type lower bound $\left| Y \right|\geq$ (constant)$\cdot n^e$, which is the same order as for spherical $2e$-design.
So, all harmonic index $T$-designs are between harmonic index $2e$-designs and spherical $2e$-designs.
It seems that considering tight $T$-designs have some meaning, although it seems tight harmonic index $T$-designs rarely exist.

As it is discussed in Bannai--Okuda--Tagami \cite[Proposition 4.1]{BOT2015} that to some extent,
\[b_n := \lim_{e\rightarrow \infty} b_{n,2e}\]
is also studied.
(The result is explained in terms of Bessel functions.)
As some special cases are mentioned in \cite[p. 10]{BOT2015}, $b_n$ becomes greater that $n(n+1)/2$ if $n\geq 7$.
This implies that tight spherical designs of harmonic index $2e$ do not exist in general, if $t=2e$ become large, say for $n\geq 7$.
(This is proved rigorously for all $n\geq 7,$ although it is not proved in \cite{BOT2015}.)
On the other hand if $n\leq 6$, it seems possible to determine (i.e., to show the non-existence of tight designs in these cases, but it is not clear how we can show the non-existence of such harmonic index $2e$-designs whose size are close to the Fisher type lower bound.
It seems that this remains as an interesting open problem.

In concluding this paper, we remark that the theory (as well as the concept) of harmonic index $T$-designs in Q-polynomial association schemes exactly go parallel with the spherical case.
The concept of $T$-design for an arbitrary subset $T$ of the index set of nontrivial relations $\{1,2, \ldots ,d\} $ is already defined in Delsarte \cite[Section 3.4]{D1973} (1973).
On the other hand, it seems that any systematic study on some specific choices of $T$, beyond the case $T=\{1,2, \ldots t\}$ has not been studied, even for the case $T=\{t\}$.
We hope to discuss more on this topic in a separate paper.

\section*{Acknowledgments}
We thank Takayuki Okuda and Makoto Tagami for many helpful discussions on the earlier stage of the study of spherical designs of harmonic index $T$, in particular when $T=\{8,4\}.$ We thank Ziqing Xiang, Shunichi Yokoyama, Masanobu Kaneko, William Stein and Maoshen Xiong for the discussions of the elliptic diophantine equations, and the help how to use the database. Eiichi Bannai is supported in part by NSFC grant No. 11271257.

\end{document}